\renewcommand*{\backref}[1]{}%
\renewcommand*{\backrefalt}[4]{[$\uparrow${\ifcase #1 Not cited.%
          \else \;#2%
          \fi%
    }]}
\def\@tocline#1#2#3#4#5#6#7{\relax
  \ifnum #1>\c@tocdepth 
  \else
    \par \addpenalty\@secpenalty\addvspace{#2}%
    \begingroup \hyphenpenalty\@M
    \@ifempty{#4}{%
      \@tempdima\csname r@tocindent\number#1\endcsname\relax
    }{%
      \@tempdima#4\relax
    }%
    \parindent\z@ \leftskip#3\relax \advance\leftskip\@tempdima\relax
    \rightskip\@pnumwidth plus4em \parfillskip-\@pnumwidth
    #5\leavevmode\hskip-\@tempdima
      \ifcase #1
       \or\or \hskip 1em \or \hskip 2em \else \hskip 3em \fi%
      #6\nobreak\relax
    \dotfill\hbox to\@pnumwidth{\@tocpagenum{#7}}\par
    \nobreak
    \endgroup
  \fi}
\newtheorem{theorem}{Theorem}[section]
\newtheorem{lemma}[theorem]{Lemma}
\newtheorem{corollary}[theorem]{Corollary}
\newtheorem{proposition}[theorem]{Proposition}
\newtheorem{question}[theorem]{Question}
\theoremstyle{definition}
\newtheorem{defn}[theorem]{Definition}
\newtheorem{remark}[theorem]{Remark}
\newcommand{\mc}{\mathcal}
\newcommand{\mb}{\mathbb}
\newcommand{\ud}{\,\mathrm{d}}
\newcommand{\K}{\alpha}
\begin{document}

\title[A Pl\"unnecke-Ruzsa inequality in compact abelian groups]{A Pl\"unnecke-Ruzsa inequality in compact abelian groups}

\author{Pablo Candela}
\address{Universidad Aut\'onoma de Madrid, and ICMAT\\ Ciudad Universitaria de Cantoblanco\\ Madrid 28049\\ Spain}
\email{pablo.candela@uam.es}

\author{Diego Gonz\'alez-S\'anchez}
\address{Universidad Aut\'onoma de Madrid, and ICMAT\\ Ciudad Universitaria de Cantoblanco\\ Madrid 28049\\ Spain}
\email{diego.gonzalezs@predoc.uam.es}

\author{Anne De Roton}
\address{Institut Elie Cartan de Lorraine, UMR 7502, Universit\'e de Lorraine, F-54506  Vandoeuvre-les Nancy, France}
\email{anne.de-roton@univ-lorraine.fr}
\subjclass[2010]{Primary 11B30; Secondary 28A05}

\begin{abstract}
The Pl\"unnecke-Ruzsa inequality is a fundamental tool to control the growth of finite subsets of abelian groups under repeated addition and subtraction. Other tools to handle sumsets have gained applicability by being extended to more general subsets of more general groups. This motivates extending the Pl\"unnecke-Ruzsa inequality, in particular to measurable subsets of compact abelian groups by replacing the cardinality with the Haar probability measure. This objective is related to the question of the stability of classes of Haar measurable sets under addition. In this direction the class of analytic sets is a natural one to work with. We prove a Pl\"unnecke-Ruzsa inequality for $K$-analytic sets in general compact (Hausdorff) abelian groups. We also discuss further extensions, some of which raise questions of independent interest in descriptive topology. \vspace{-0.3cm}
\end{abstract}
\vspace*{-0.3cm}
\maketitle
\section{Introduction}
\noindent The Pl\"unnecke-Ruzsa inequality is a central result in additive combinatorics, providing useful upper bounds for the cardinality of iterated sums and differences of a finite subset of an abelian group. The version of the result that is used most often is the following.
\begin{theorem}\label{thm:Plun-finite}
Let $A, B$ be finite non-empty subsets of an abelian group and suppose that $|A+B| \leq \K |A|$. Then for all non-negative integers $m,n$ we have $|mB-nB|\leq \K^{m+n} |A|$.
\end{theorem}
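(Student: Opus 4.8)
The plan is to prove Theorem~\ref{thm:Plun-finite} by Petridis's method, which reaches the bound through an elementary induction together with Ruzsa's triangle inequality. First I would pass to an optimal subset: among all non-empty $X\subseteq A$, choose one minimising the magnification ratio $|X+B|/|X|$, and let $K$ denote this minimum. Taking $X=A$ shows $K\le |A+B|/|A|\le\K$, and by minimality every non-empty $Z\subseteq X$ satisfies $|Z+B|\ge K|Z|$; this last property is all we shall need about $X$.

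The heart of the argument is Petridis's lemma: for every finite set $C$ one has $|X+B+C|\le K|X+C|$. I would prove this by induction on $|C|$, the case $|C|=1$ being the equality $|X+B+c|=|X+B|=K|X|=K|X+c|$. For the inductive step, write $C=C'\cup\{c\}$ with $c\notin C'$, and set $Z:=\{x\in X:\ x+B+c\subseteq X+B+C'\}$, the set of translates already absorbed. Since $Z+B+c$ is contained in both $X+B+c$ and $X+B+C'$, the new contribution is controlled: $|(X+B+c)\setminus(X+B+C')|\le |X+B|-|Z+B|\le K|X|-K|Z|=K\,|X\setminus Z|$, where minimality of $X$ enters in the middle inequality. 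On the other hand, if $x\in X\setminus Z$ then $x+c\notin X+C'$ (else $x+B+c$ would lie in $X+B+C'$), so translation by $c$ embeds $X\setminus Z$ into $(X+c)\setminus(X+C')$, whence $|X\setminus Z|\le |X+C|-|X+C'|$. Combining these two bounds with the inductive hypothesis $|X+B+C'|\le K|X+C'|$ gives $|X+B+C|=|X+B+C'|+|(X+B+c)\setminus(X+B+C')|\le K|X+C|$, completing the induction.

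With the lemma in hand, iterating it gives $|X+jB|\le K^{j}|X|$ for every $j\ge 0$. To finish I would invoke Ruzsa's triangle inequality $|V|\,|U-W|\le|U-V|\,|V-W|$, which follows from the injection $(x,v)\mapsto(u(x)-v,\,v-w(x))$ of $(U-W)\times V$ into $(U-V)\times(V-W)$ (for each $x\in U-W$ fix $u(x)\in U$, $w(x)\in W$ with $u(x)-w(x)=x$; the map is injective because $x=a+b$ recovers $x$, hence $u(x)$, hence $v=u(x)-a$). Applying it with $U=mB$, $W=nB$, $V=-X$ yields $|X|\,|mB-nB|\le|X+mB|\,|X+nB|\le K^{m}|X|\cdot K^{n}|X|$, and dividing by $|X|$ leaves $|mB-nB|\le K^{m+n}|X|\le\K^{m+n}|A|$, since $K\le\K$ and $|X|\le|A|$.

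The only delicate point is the inductive step of Petridis's lemma: it works precisely because the auxiliary set $Z$ is chosen so that, on the one hand, $Z+B+c$ absorbs enough of $X+B+c$ for the minimality of $X$ to bound the genuinely new elements by $K|X\setminus Z|$, and, on the other hand, $X\setminus Z$ injects into the part of $X+C$ not already coming from $X+C'$. Once the correct $Z$ is identified the rest is routine counting; no compactness, measure theory, or graph magnification is needed for this finite statement.
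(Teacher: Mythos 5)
Your proposal is correct and is precisely the route the paper points to: it cites Petridis and deduces Theorem~\ref{thm:Plun-finite} from the statement recorded here as Theorem~\ref{thm:pr} (a subset $X\subseteq A$ with $|X+mB|\le\K^m|X|$) combined with Ruzsa's triangle inequality, which is exactly what you do, and your writeup of Petridis's induction (the choice of $Z$, the bound $|X+B|-|Z+B|\le K|X\setminus Z|$, and the injection of $X\setminus Z$ into $(X+C)\setminus(X+C')$) is complete and accurate. The paper itself offers no independent proof of this finite statement, so there is nothing further to compare.
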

\noindent A first version of this result (for iterated sums only) was proved by Pl\"unnecke in the late 1960s \cite{Plun2}. The proof was simplified and the result extended to sums and differences by Ruzsa in the late 1980s \cite{Ruz1}. Both of these treatments of the result used nontrivial tools from graph theory. In 2011, a much shorter and elementary proof was given by Petridis \cite{Petridis1}. We refer the reader to the latter paper and also to the survey \cite{Petridis2} of the same author for more background on this result and its numerous applications.

Theorem \ref{thm:Plun-finite} is applicable in the discrete setting of \emph{finite} subsets of abelian groups. Other central tools to handle sumsets have gained much applicability by being extended from the discrete setting to more general settings including continuous groups. This is the case for instance for the Cauchy-Davenport inequality, which was extended to the circle group in \cite{Raik}, to tori in \cite{Macbeath}, and to compact connected abelian groups in \cite{Knes}. This motivates extending Theorem \ref{thm:Plun-finite} to more general subsets of more general abelian groups. Here we focus on Haar-measurable subsets of compact abelian groups, aiming for an extension of Theorem \ref{thm:Plun-finite} with the cardinality replaced by the Haar probability measure. This leads us to seek a suitable class of Haar-measurable sets for which to prove such an extension. Such a class should be sufficiently general, but it is also natural to require it to be stable under addition, meaning that if $A,B$ are sets in this class then so is their sumset $A+B$. Questions related to this stability were already of interest to Erd\H os and Stone, who showed in \cite{E&S} that the sum of two Borel sets can fail to be Borel. It is also 
known since Sierpi\'nski's work \cite{Sierp} that the sum of Lebesgue measurable sets need not be Lebesgue measurable (see also \cite{CFF}). However, the class of \emph{analytic sets} is stable under addition (as was already noted in \cite{E&S,Sierp}), and in a Polish space (a separable topological space metrizable by a complete metric) this class is general enough to contain all Borel sets; see Proposition 8.2.3 of \cite{cohn}. In this paper we extend the Pl\"unnecke-Ruzsa inequality to analytic sets in compact Polish abelian groups, and thus to all Borel sets in such groups. In fact, our main result holds for the more general class of \emph{$K$-analytic sets}, which can be defined in any compact (Hausdorff) abelian group, as we recall below.

There are also extensions of additive combinatorial tools to the non-abelian setting, for instance in \cite{Kemp} and more recently in \cite{Tao-prod}. The latter paper includes a variant of the Pl\"unnecke-Ruzsa inequality (with weaker bounds) for non-abelian groups (see Lemma 3.4 in \cite{Tao-prod}), and also related results for open sets in some continuous groups. The extensions in this paper go in a different direction, their aim being to make the Pl\"unnecke-Ruzsa inequality applicable to as large a class of sets as possible in the compact abelian setting.

Before we state our main result, let us recall some definitions. All compact abelian groups in the sequel are assumed to be Hausdorff. In the setting of general Hausdorff topological spaces, Choquet defined the useful notion of a \emph{$K$-analytic set}; see Definition 3.1 in \cite{choquet}. This extended the classical notion of analytic set defined by Lusin and Souslin \cite{Lus,Sous}, the latter notion pertaining to Polish spaces. To state Choquet's definition, let us first recall that a subset $B$ of a topological space $X$ is a \emph{$K_{\sigma \delta}$ set} if $B= \bigcap_{i\in \mb{N}}\bigcup_{j\in\mb{N}} K_{i,j}$, for compact sets $K_{i,j}\subset X$. 
\begin{defn}[Choquet]\label{def:Choquet-ana}
Let $X$ be a Hausdorff topological space. A set $A\subset X$ is a  \emph{$K$-analytic set} if there is a $K_{\sigma \delta}$ set $B$ in a compact Hausdorff space and a continuous map $f:B\to X$ such that $A=f(B)$.
\end{defn}
\noindent We recall more background on $K$-analytic sets in Section \ref{sec:Gen} below.  Let us note for now that the sum or difference of two $K$-analytic sets in a compact abelian group $G$ is $K$-analytic (this follows from the definition, and is detailed in Section \ref{sec:Gen}), and that $K$-analytic subsets of $G$ are Haar-measurable; see Theorem 4.3 in \cite{sion}. 

Let $\mu$ denote the Haar probability measure on $G$. We can now state our main result.
\begin{theorem}\label{thm:final-result}
Let $G$ be a compact abelian group and let $A,B$ be $K$-analytic subsets of $G$ satisfying $0<\mu(A+B) \leq \K\,\mu(A)$. Then we have $\mu(mB-nB) \leq \K^{m+n}\mu(A)$ for all non-negative integers $m,n$.
\end{theorem}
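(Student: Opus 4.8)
\emph{Overview.} The plan is to deduce Theorem~\ref{thm:final-result} from the finite Theorem~\ref{thm:Plun-finite} in two stages: first reduce the statement for $K$-analytic sets to the same statement for compact sets, and then run a Haar-measure version of Petridis's proof \cite{Petridis1}, combined with a Ruzsa triangle inequality for Haar measure.

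\emph{Reduction to compact sets.} Since the class of $K$-analytic sets is closed under addition and subtraction, $A+B$ and $mB-nB$ are $K$-analytic, hence Haar-measurable. If $\mu(B)=0$ we first replace $B$ by $B+U$ for an open symmetric neighbourhood $U$ of $0$: because $\mu\big((A+B)+U\big)$ decreases to $\mu(\overline{A+B})$ as $U$ shrinks, the hypothesis still holds with $\K$ slightly enlarged, while $m(B+U)-n(B+U)\supseteq mB-nB$; so we may assume $\mu(B)>0$. Next, using Choquet's representation $B=f(D)$ with $D$ a $K_{\sigma\delta}$ set in a compact Hausdorff space and $f$ continuous, one writes $mB-nB$ as a continuous image (which, after replacing the domain by the closure of the graph, may be taken continuous on the whole compact ambient space) of the $K_{\sigma\delta}$ set $D^{m+n}$. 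The set function $E\mapsto\mu^{*}\big(h(E)\big)$ is then a Choquet capacity on subsets of that compact space --- monotone, continuous along increasing sequences of arbitrary sets (regularity of outer Haar measure) and along decreasing sequences of compact sets (continuity of $h$ and of $\mu$ from above) --- so Choquet's capacitability theorem gives $\mu(mB-nB)=\sup\{\mu(h(C)):C\subseteq D^{m+n}\text{ compact}\}$. Each $h(C)$ lies in $m\widetilde B-n\widetilde B$ for the compact set $\widetilde B:=\bigcup_{j}f(\pi_{j}C)\subseteq B$, whence $\mu(mB-nB)=\sup\{\mu(m\widetilde B-n\widetilde B):\widetilde B\subseteq B\text{ compact}\}$. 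Together with inner regularity of Haar measure applied to $A$ (and $\mu(A_{0}+\widetilde B)\le\mu(A+B)\le\K\mu(A)$ for compact $A_{0}\subseteq A$), it therefore suffices to prove: for compact $A,B$ with $\mu(B)>0$ and $\mu(A+B)\le\K\mu(A)$ one has $\mu(mB-nB)\le\K^{m+n}\mu(A)$.

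\emph{The compact case.} Set $\K^{*}=\inf\{\mu(A'+B)/\mu(A'):A'\subseteq A\text{ measurable},\ \mu(A')>0\}\in[1,\K]$. Since $\mu(A'+B)\ge\mu(B)$ for every nonempty $A'$, every near-minimiser obeys a uniform lower bound on its measure, and this should allow one to produce a compact set $A_{\min}\subseteq A$ \emph{attaining} the infimum: $\mu(A_{\min}+B)=\K^{*}\mu(A_{\min})$, while $\mu(A''+B)\ge\K^{*}\mu(A'')$ holds for every measurable $A''\subseteq A_{\min}$ (the latter being automatic since $A''\subseteq A$). Granting such an $A_{\min}$, Petridis's lemma $\mu(A_{\min}+B+C)\le\K^{*}\mu(A_{\min}+C)$ carries over to Haar measure. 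For finite $C$ one argues by induction on $|C|$ exactly as in \cite{Petridis1}: writing $C=C'\cup\{x\}$ and
\[
S=\{a\in A_{\min}:a+B+x\subseteq A_{\min}+B+C'\}=A_{\min}\cap\Big(\textstyle\bigcap_{b\in B}(A_{\min}+B+C'-b)-x\Big),
\]
which is \emph{closed} (an intersection of translates of the compact set $A_{\min}+B+C'$, hence measurable), one bounds the measure of the new part $(A_{\min}+B+x)\setminus(A_{\min}+B+C')$ by $\mu(A_{\min}+B)-\mu(S+B)$; then $\mu(S+B)\ge\K^{*}\mu(S)$ and $\mu(A_{\min}+B)=\K^{*}\mu(A_{\min})$ bound this by $\K^{*}\mu(A_{\min}\setminus S)\le\K^{*}\mu\big((A_{\min}+x)\setminus(A_{\min}+C')\big)$, which closes the induction. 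Passing to arbitrary compact $C$ by inner approximation of $C$ by finite sets, and then iterating the lemma with $C=(k-1)B,\dots,B$ (all compact), yields $\mu(A_{\min}+kB)\le(\K^{*})^{k}\mu(A_{\min})$ for all $k\ge1$.

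\emph{From sums to differences, and the main obstacle.} It remains to establish the Haar-measure Ruzsa triangle inequality $\mu(X)\,\mu(mB-nB)\le\mu(X+mB)\,\mu(X+nB)$ for $\mu(X)>0$. Fix a Haar-measurable uniformisation $w\mapsto(p_{w},q_{w})$ of the $K$-analytic relation $\{(w,p,q):p\in mB,\ q\in nB,\ p-q=w\}$ over $mB-nB$; then the finite-case injection $(a,w)\mapsto(a+p_{w},a+q_{w})$ sends $X\times(mB-nB)$ into $(X+mB)\times(X+nB)$, and composing with the measure-preserving automorphism $(s,t)\mapsto(s-t,t)$ of $G\times G$ turns it into $(a,w)\mapsto(w,a+q_{w})$, which is measure-preserving onto its image $\bigcup_{w\in mB-nB}\{w\}\times(X+q_{w})$; by Fubini this image has $(\mu\times\mu)$-measure $\mu(X)\mu(mB-nB)$ and lies inside a set of measure $\mu(X+mB)\mu(X+nB)$, giving the inequality. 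Applying it with $X=A_{\min}$ and using the previous paragraph, $\mu(mB-nB)\le\mu(A_{\min}+mB)\mu(A_{\min}+nB)/\mu(A_{\min})\le(\K^{*})^{m+n}\mu(A_{\min})\le\K^{m+n}\mu(A)$, since $\K^{*}\le\K$ and $\mu(A_{\min})\le\mu(A)$. The step I expect to be the main obstacle is the existence of the minimiser $A_{\min}$, and more generally the legitimacy of the inner approximations by finite sets: the functional $A'\mapsto\mu(A'+B)/\mu(A')$ is not continuous for any natural topology on sets --- adding a Haar-null set can change $\mu(A'+B)$, and the ``algebraic'' sumset $A'+B$ differs in general from the ``statistical'' one $\{x:\mu(A'\cap(x-B))>0\}$ governed by convolution --- so these extractions cannot be taken for granted and must be done by hand, exploiting the uniform measure bound on near-minimisers together with the compactness of all the sets in play. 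The companion point, namely that every auxiliary set occurring in the argument (notably $S$, the iterated sumsets, and the uniformisation $w\mapsto(p_w,q_w)$) is Haar-measurable, is exactly what the $K$-analytic framework is designed to guarantee.
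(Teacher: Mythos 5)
Your first stage (reducing to compact $A,B$ via capacitability of $K$-analytic sets) is essentially the paper's Lemma~\ref{lem:keyana}, which is proved via Sion's theorem rather than by verifying the Choquet capacity axioms directly, and that part of your plan is sound. The proof breaks down, however, in the compact case, and at exactly the point you flag: the existence of the minimiser $A_{\min}$. In the finite setting the infimum defining $\K^{*}$ is over finitely many subsets and is trivially attained; in the continuous setting the functional $A'\mapsto\mu(A'+B)/\mu(A')$ has no useful semicontinuity (as you yourself observe, modifying $A'$ by a null set can change $\mu(A'+B)$ drastically, since $A'+B$ is an algebraic sumset and not governed by convolution), and the uniform lower bound $\mu(A')\geq\mu(B)/(\K^{*}+\epsilon)$ on near-minimisers gives no compactness in any space of sets in which the functional is continuous. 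No construction of $A_{\min}$ is offered, so the Petridis induction never gets off the ground. There are further unresolved steps downstream: the passage from finite $C$ to compact $C$ in Petridis's lemma needs $\mu(A_{\min}+B+C)=\sup_{F\subset C\text{ finite}}\mu(A_{\min}+B+F)$, which is not justified (the sumset with a countable dense subset of $C$ can be a proper, a priori smaller-measure, subset of the sumset with $C$); the preliminary reduction to $\mu(B)>0$ replaces the hypothesis by $\mu(A+B+U)\leq\K''\mu(A)$ where $\K''\to\mu(\overline{A+B})/\mu(A)$, which can be much larger than $\K$ since $A+B$ need not be closed; and the measurable uniformisation $w\mapsto(p_w,q_w)$ used for the triangle inequality relies on selection theorems that are delicate outside the Polish setting (and is unnecessary, since the inequality is only ever needed for compact sets, where Tao's Lemma 3.2 in \cite{Tao-prod} applies directly).

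The paper avoids the minimiser problem entirely by a different route in the compact case: it first proves the result for closed subsets of a compact abelian \emph{Lie} group $\mb{T}^d\times Z$ by thickening $A,B$ to $A_n,B_n$, discretising to the finite group $\tfrac1N\mb{Z}_N^d\times Z$, and applying the \emph{finite} Petridis theorem (Theorem~\ref{thm:pr}) there, where the minimiser exists for free; it then transfers the conclusion back with a $(1+\epsilon)^m$ loss and combines it with Tao's triangle inequality for compact sets. General compact abelian groups are handled by approximating closed sets through quotients onto Lie groups (Lemma~\ref{lem:approx-general-cpct-gps}). If you want to salvage your direct approach, the honest statement is that you would need either a genuine existence proof for $A_{\min}$ (which seems hard for the reasons above) or a discretisation step of the kind the paper uses; as written, the argument has a gap at its central step.
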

\noindent As mentioned above, if $G$ is also Polish then the theorem holds in particular for any Borel sets $A,B\subset G$. We also prove the following variant, which can be useful in cases where the constant $\K\geq 1$ is close to 1.
\begin{theorem}\label{thm:final-result2}
Let $G$ be a connected compact abelian group and let $A,B$ be $K$-analytic subsets of $G$ satisfying $0<\mu(A+B) \leq \K\,\mu(A)$. Then for all $n\in \mb{N}$ such that $\K^n<1/\mu(A)$, we have $\mu(nB) \leq (\K^n-1)\,\mu(A)$.
\end{theorem}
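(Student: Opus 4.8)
The plan is to push the proof of Theorem~\ref{thm:final-result} only as far as the Pl\"unnecke-Ruzsa estimate it produces for a suitable subset of $A$, and then to combine that estimate with the analogue of the Cauchy-Davenport inequality available in connected compact abelian groups. Recall the latter, due to Kneser~\cite{Knes} (and, for tori, Macbeath~\cite{Macbeath}): for Haar-measurable sets $S,T\subseteq G$ with $S+T$ measurable one has $\mu(S+T)\ge\min\{1,\mu(S)+\mu(T)\}$. Connectedness of $G$ is what makes this clean: the only closed subgroup of $G$ of positive Haar measure is $G$ itself, since a closed subgroup of positive measure has finite index and is therefore open, hence equal to $G$ by connectedness; so the stabilizer term in Kneser's theorem is trivial in this setting. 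As the class of $K$-analytic sets is stable under addition, every sumset occurring below is $K$-analytic, hence Haar-measurable, and the inequality above, classical for compact sets, passes to $K$-analytic sets by inner regularity of $\mu$ (approximate $S$ and $T$ from within by compact sets), so no measurability obstruction arises.

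First I would invoke the engine of Theorem~\ref{thm:final-result}: arguing exactly as in its proof --- passing to a $K$-analytic subset of $A$ of minimal, or near-minimal, ratio $\mu(X+B)/\mu(X)$, as is in any case required there --- one obtains a $K$-analytic set $X\subseteq A$ with $\mu(X)>0$ and $\mu(X+nB)\le\K^n\mu(X)$. Note that $\K\ge 1$, since $\mu(A+B)\ge\mu(A+b)=\mu(A)$ for any $b\in B$; in particular $\K^n-1\ge 0$. By hypothesis $\mu(A)<1/\K^n$, and $\mu(X)\le\mu(A)$, so $\mu(X+nB)\le\K^n\mu(X)<1$. Now I would apply the Cauchy-Davenport-Kneser inequality to the pair $X$, $nB$: since $\mu(X+nB)<1$, it yields $\mu(X)+\mu(nB)\le\mu(X+nB)\le\K^n\mu(X)$, that is, $\mu(nB)\le(\K^n-1)\mu(X)$. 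As $0\le\K^n-1$ and $0<\mu(X)\le\mu(A)$, this gives $\mu(nB)\le(\K^n-1)\mu(A)$, as desired. (Thus the variant improves the bound $\mu(nB)\le\K^n\mu(A)$ coming from Theorem~\ref{thm:final-result} by subtracting $\mu(A)$, at the price of the hypothesis $\K^n<1/\mu(A)$.)

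The last step is routine; the substantial work is upstream, in securing $\mu(X+nB)\le\K^n\mu(X)$ for a positive-measure $K$-analytic set $X\subseteq A$, and specifically in handling the possible non-attainment of the minimal expansion ratio in the Haar-measurable setting. That, however, is precisely the technical core of Theorem~\ref{thm:final-result}, so here it comes for free; the genuinely new ingredient is the Cauchy-Davenport-Kneser inequality. It is this that forces $G$ to be connected, and it also explains the restriction $\K^n<1/\mu(A)$: that hypothesis is exactly what keeps $\mu(X+nB)$ strictly below $1$, so that the inequality can be applied.
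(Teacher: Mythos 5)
Your overall strategy---run the Pl\"unnecke--Ruzsa machinery only up to a Petridis-type estimate $\mu(X+nB)\le\K^n\mu(X)$ and then finish with a Cauchy--Davenport-type inequality, the hypothesis $\K^n<1/\mu(A)$ being exactly what keeps the relevant sumset measure below $1$---is the paper's strategy (Corollaries \ref{cor:cauchy} and \ref{cor:cauchy2}, then the compact approximation of Section \ref{sec:Gen}). One step is over-claimed, though. The proof of Theorem \ref{thm:final-result} never produces a $K$-analytic (or even measurable) set $X\subseteq A$ of positive measure satisfying $\mu(X+nB)\le\K^n\mu(X)$ \emph{exactly}: what Theorem \ref{thm:pr-closed} delivers is a closed subset $A_n'$ of the \emph{thickening} $A_n\supseteq A$ of a closed set in a compact abelian \emph{Lie} group, satisfying only $\mu(A_n'+nB)\le(1+\epsilon)^n\K^n\mu(A_n')$, and the passage to general compact groups and to $K$-analytic sets degrades the constant further (from $\K$ to $\K+\delta/\mu(A)$, etc.). Whether an exact continuous Petridis subset of $A$ exists is not addressed anywhere, and the ``minimal ratio'' need not be attained, so you cannot quote such an $X$ as coming for free. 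The repair is routine and is exactly what the paper does: apply the Cauchy--Davenport step to the approximants, whose measures tend to $\mu(A)$ and whose expansion constants tend to $\K$; since the hypothesis $\K^n<1/\mu(A)$ is an open condition it survives these perturbations for $\epsilon,\delta$ small, and the bound $(\K^n-1)\mu(A)$ is recovered in the limit.

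The one genuine difference of route is where the Cauchy--Davenport input enters. You invoke Kneser's theorem in the connected group $G$ itself, applied to $K$-analytic sets via inner regularity (your justification of this transfer, and of the triviality of the stabilizer by connectedness, is correct). The paper instead applies Macbeath's theorem for $\mb{T}^d$ to closed sets in an approximating Lie group (which, being connected, is a torus) and transfers the conclusion back through the quotient map and through Theorem \ref{thm:sion}/Lemma \ref{lem:keyana}. Your version is tidier at that step but requires the full Kneser theorem for compact connected abelian groups; the paper's requires only Macbeath's torus case, at the price of carrying the Lie-group approximation one stage further. With the approximation issue handled as above, your argument is correct.
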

\noindent This result also holds in finite groups in which an analogue of the Cauchy-Davenport inequality is available, as we shall explain in the sequel.

The condition $0<\mu(A+B)$ in the theorems above is necessary. Indeed, let $C$ be the Cantor middle-third set in $[0,1]$, and let $B$ denote $C$ viewed as a subset of the circle group $\mb{T}=\mb{R}/\mb{Z}$ (identifying this group as a set with $[0,1)$ the usual way). Since in $\mb{R}$ we have $C+C=[0,2]$ (as can be seen using ternary expansions), in $\mb{T}$ we have $\mu(mB-nB)=1$ whenever $m+n\geq 2$. If we now let $A$ be a singleton in $\mb{T}$, then $\mu(A+B)=\mu(B)=0$. In particular, for every $\K>0$ we have $\mu(A+B) \leq \K\,\mu(A)$, but the conclusion of Theorem \ref{thm:final-result} fails for all $m,n\in\mb{N}$.

The paper is laid out as follows. In Section \ref{sec:closed} we establish the special case of Theorem \ref{thm:final-result} for closed subsets of an arbitrary compact abelian Lie group. In Section \ref{sec:approx}, we prove an approximation result for closed subsets of general compact abelian groups by subsets of compact abelian Lie groups, which refines a similar result from \cite{CSV}. This is then combined in Section \ref{sec:Gen} with measure-theoretic results concerning $K$-analytic subsets of Hausdorff spaces, and using this we complete the proofs of Theorems \ref{thm:final-result} and \ref{thm:final-result2}. In Section \ref{sec:finrems} we discuss further extensions of Theorem \ref{thm:final-result}. In particular we prove a version of Theorem \ref{thm:final-result} involving the inner Haar measure, which allows the set $A$ to be arbitrary; see Theorem \ref{thm:more-general-version}. We then discuss further possible extensions of Theorem \ref{thm:final-result} to more general classes of Haar measurable sets, a direction which leads to basic questions in descriptive topology concerning generalizations of $K$-analytic sets (see for instance Question \ref{q:class}).
\medskip

\noindent \textbf{Acknowledgements.} We are very grateful to Petr Holick\'y for providing the example in Proposition \ref{prop:Holi} and for very useful comments. We thank an anonymous referee for useful remarks. This work was supported by project ANR-12-BS01-0011 CAESAR and by grant MTM2014-56350-P of MINECO. The second named author is supported by La Caixa.

\section{The case of closed sets in compact abelian Lie groups}\label{sec:closed} 
\noindent Every compact abelian Lie group is isomorphic to $\mb{T}^d\times Z$ for some non-negative integer $d$ and some finite abelian group $Z$; see Proposition 2.42 in \cite{H&Mo}. In this section we prove the following special case of Theorem \ref{thm:final-result}.

\begin{theorem}\label{thm:plunnecke-closed} Let $A,B$ be closed subsets of $\mb{T}^d\times Z$ satisfying $0<\mu(A+B)\le \K\,\mu(A)$. Then for all non-negative integers $\ell,m$ we have $\mu(\ell B- mB)\leq \K^{\ell+m}\mu(A)$.
\end{theorem}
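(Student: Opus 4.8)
Here is how I would prove Theorem~\ref{thm:plunnecke-closed}.

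The plan is to deduce it from the finite Pl\"unnecke--Ruzsa inequality (Theorem~\ref{thm:Plun-finite}) by discretising $\mb{T}^d\times Z$ along its finite subgroups $\Lambda_N:=(\tfrac1N\mb{Z}/\mb{Z})^d\times Z$. Identifying $\mb{T}^d$ with $[0,1)^d$ in the usual way, put $Q_N:=[0,1/N)^d\times\{0\}$, so that $\mb{T}^d\times Z$ is tiled by the $\Lambda_N$-translates of $Q_N$ and $\mu(Q_N)=1/(N^d|Z|)$. I will call a set of the form $S+Q_N$ with $S\subseteq\Lambda_N$ a \emph{box-set at scale $N$}; for such a set the translates involved are pairwise disjoint, so $\mu(S+Q_N)=|S|/(N^d|Z|)$. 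The argument has two parts: a reduction of the general closed case to box-sets, and a core step for box-sets that applies Theorem~\ref{thm:Plun-finite} inside the finite group $\Lambda_N$.

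For the reduction, given closed $A,B$ I take $A^{(N)}$ to be the union of those boxes $x+Q_N$, $x\in\Lambda_N$, that meet $A$, and define $B^{(N)}$ likewise. These are box-sets at scale $N$ containing $A$ and $B$, and each one shrinks down to the original set as $N\to\infty$, in the sense that every open set containing $A$ contains $A^{(N)}$ for all large $N$ (since $A$ is compact and the boxes shrink). By outer regularity of $\mu$ this gives $\mu(A^{(N)})\to\mu(A)$, and since $A+B$ and $\ell B-mB$ are compact and are contained in $A^{(N)}+B^{(N)}$ and $\ell B^{(N)}-mB^{(N)}$, which shrink down to them by the same reasoning, also $\mu(A^{(N)}+B^{(N)})\to\mu(A+B)$ and $\mu(\ell B^{(N)}-mB^{(N)})\to\mu(\ell B-mB)$. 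The hypothesis $0<\mu(A+B)\le\K\mu(A)$ forces $\mu(A)>0$ and $A,B\neq\emptyset$, so $\kappa_N:=\mu(A^{(N)}+B^{(N)})/\mu(A^{(N)})$ is well defined and positive, with $\kappa_N\to\mu(A+B)/\mu(A)\le\K$. It therefore suffices to prove the theorem for box-sets (with $\K$ replaced by $\kappa_N$) and then let $N\to\infty$.

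For the core step, let $A=A_N+Q_N$ and $B=B_N+Q_N$ be box-sets at scale $N$, with $A_N,B_N\subseteq\Lambda_N$ nonempty and $0<\mu(A+B)\le\kappa\,\mu(A)$; note $|A_N|=N^d|Z|\,\mu(A)$. The key device is the set $U:=\{0,1/N\}^d\times\{0\}\subseteq\Lambda_N$, which satisfies
\[
U+Q_N=[0,2/N)^d\times\{0\}=Q_N+Q_N .
\]
Setting $\widetilde B:=B_N+U$, this identity gives $A_N+\widetilde B+Q_N=A_N+B_N+(Q_N+Q_N)=A+B$, so, as $A_N+\widetilde B\subseteq\Lambda_N$, $|A_N+\widetilde B|=N^d|Z|\,\mu(A+B)\le\kappa\,|A_N|$. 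Moreover, expanding in the abelian group, $\ell B-mB=(\ell B_N-mB_N)+(\ell Q_N-mQ_N)$ and $\ell\widetilde B-m\widetilde B=(\ell B_N-mB_N)+(\ell U-mU)$; since $\ell Q_N-mQ_N=(-m/N,\ell/N)^d\times\{0\}\subseteq(\ell U-mU)+Q_N$, it follows that $\ell B-mB\subseteq(\ell\widetilde B-m\widetilde B)+Q_N$, whence $\mu(\ell B-mB)\le|\ell\widetilde B-m\widetilde B|/(N^d|Z|)$. Applying Theorem~\ref{thm:Plun-finite} in the finite abelian group $\Lambda_N$ to the sets $A_N$ and $\widetilde B$ (with ratio $\kappa$) yields $|\ell\widetilde B-m\widetilde B|\le\kappa^{\ell+m}|A_N|=\kappa^{\ell+m}N^d|Z|\,\mu(A)$, and combining this with the previous inequality gives $\mu(\ell B-mB)\le\kappa^{\ell+m}\mu(A)$, as desired.

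The step I expect to be the crux is this core step, specifically the need to discretise without losing a constant: rounding $B$ to $\Lambda_N$ naively would inflate $\ell B-mB$ by a factor of order $(\ell+m)^d$, because the $\ell+m$ summands each introduce a rounding error of size $1/N$ that together span a box far larger than $Q_N$. Passing to $\widetilde B=B_N+U$, via the identity $U+Q_N=Q_N+Q_N$, is precisely what makes the discretisation lossless while keeping the sumset ratio $|A_N+\widetilde B|/|A_N|$ equal to $\mu(A+B)/\mu(A)$. The remaining points---the limits $\mu(A^{(N)})\to\mu(A)$ and the like in the reduction (via compactness and outer regularity), and the bookkeeping of the measure-zero boundaries produced by half-open boxes---are routine and will be handled briefly.
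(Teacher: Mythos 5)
Your proof is correct, and it takes a genuinely different route from the paper's, even though the two share the central discretisation device. The paper never invokes Theorem \ref{thm:Plun-finite} directly: it first proves a Petridis-type statement (Theorem \ref{thm:pr-closed}), producing a closed subset $A'_n\subset A_n$ with $\mu(A'_n+mB)\le(1+\epsilon)^m\K^m\mu(A'_n)$ by applying Petridis's Theorem \ref{thm:pr} to the discretised sets $D_A$ and $D_B+\big(\{0,\tfrac{1}{N}\}^d\times\{0_Z\}\big)$ --- the same $\{0,\tfrac{1}{N}\}^d$ device as your set $U$ --- and then deduces Theorem \ref{thm:plunnecke-closed} from this via a Ruzsa triangle inequality for closed sets in compact abelian groups (Lemma \ref{lem:ruzsa-triangle-inequality}, borrowed from Tao). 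You instead cover $A$ and $B$ by box-sets from the outside, transfer both hypothesis and conclusion losslessly between $\mathbb{T}^d\times Z$ and the finite group $\Lambda_N$ using the identity $U+Q_N=Q_N+Q_N$, and apply the full finite Pl\"unnecke--Ruzsa inequality as a black box; no Petridis subset and no triangle inequality in the compact group are needed. Your route is shorter and more self-contained for this one theorem; what the paper's detour buys is the intermediate Theorem \ref{thm:pr-closed} itself, which is reused (together with Macbeath's theorem on $\mathbb{T}^d$) to prove Corollary \ref{cor:cauchy} and ultimately Theorem \ref{thm:final-result2}, and which your argument does not produce. The steps you flag as routine do check out: the containments $\ell Q_N-mQ_N\subset(\ell U-mU)+Q_N$ and the identity $(A_N+\widetilde B)+Q_N=A+B$ are exactly right, the limits $\mu(A^{(N)})\to\mu(A)$ and $\mu(A^{(N)}+B^{(N)})\to\mu(A+B)$ follow from compactness plus outer regularity as you say (no nestedness of the $A^{(N)}$ is needed), and since the half-open boxes tile $\mathbb{T}^d\times Z$ exactly there is in fact no boundary bookkeeping to do at all.
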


\begin{remark}
The sum or difference of any finite number of closed sets in a compact abelian group is closed. Indeed, in a compact Hausdorff space a set is closed if and only if it is compact. Therefore, the sum of any (finite) number of closed sets is the image of a compact set through a continuous map, so it is compact, whence it is also closed.
\end{remark}

Given a set $A\subset \mb{T}^d\times Z$, and a positive integer $n$, we define the set
\begin{equation}
A_n\;=\; A+\big([-\tfrac{1}{n},\tfrac{1}{n}]^d\times \{0_Z\}\big)\,\subset\, \mb{T}^d\times Z.
\end{equation}
\begin{remark}
The sequence of sets $(A_n)_{n\in \mb{N}}$ is decreasing and $\cap_{n\in\mb{N}}A_n=\overline{A}$. In particular, for a closed set $A$, by continuity of $\mu$ we have $\mu(A_n)\rightarrow\mu(A)$. 
\end{remark}

\noindent Theorem \ref{thm:Plun-finite} is usually deduced from the following result (see Theorem 3.1 in \cite{Petridis1}).

\begin{theorem}\label{thm:pr}
Let $A$ and $B$ be finite non-empty subsets of an abelian group satisfying $|A+B|\leq \K|A|$. Then there exists a non-empty subset $X\subset A$ such that for every positive integer $m$ we have $|X+ m B| \leq \K^m |X|$.
\end{theorem}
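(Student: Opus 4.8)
The plan is to follow Petridis' argument. First I would choose a non-empty subset $X\subseteq A$ minimizing the ratio $|X+B|/|X|$ over all non-empty subsets of $A$, and call the minimal value $\K'$; since $A$ itself gives ratio at most $\K$, we have $\K'\le\K$, and by minimality every non-empty $W\subseteq A$ satisfies $|W+B|\ge\K'|W|$ (the case $W=\emptyset$ being vacuous with both sides zero). The desired conclusion $|X+mB|\le\K^m|X|$ then reduces to proving $|X+mB|\le\K'^m|X|$ for every positive integer $m$, which in turn I would deduce from the following stronger claim: for every finite non-empty subset $C$ of the group, $|X+B+C|\le\K'|X+C|$.

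I would prove this claim by induction on $|C|$. The base case $|C|=1$ is exactly the definition of $\K'$ (a translate of $X+B$ has the same size as $X+B=\K'|X|=\K'|X+C|$). For the inductive step, write $C=C'\cup\{c\}$ with $c\notin C'$, and introduce the auxiliary set $W=\{x\in X: x+B+c\subseteq X+B+C'\}$. Two observations drive the step. First, $W+B+c\subseteq (X+B+C')\cap(X+B+c)$, so inclusion–exclusion applied to $X+B+C=(X+B+C')\cup(X+B+c)$ gives $|X+B+C|\le |X+B+C'|+|X+B|-|W+B|$; using $|X+B|=\K'|X|$, the bound $|W+B|\ge\K'|W|$, and the inductive hypothesis on $C'$, this becomes $|X+B+C|\le \K'\bigl(|X+C'|+|X|-|W|\bigr)$. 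Second, if $x\in X$ and $x+c\in X+C'$, say $x+c=y+c'$ with $y\in X$, $c'\in C'$, then $x+B+c=y+B+c'\subseteq X+B+C'$, so $x\in W$; hence $(X+c)\cap(X+C')$ has cardinality at most $|W|$, and inclusion–exclusion for $X+C=(X+C')\cup(X+c)$ yields $|X+C|\ge |X+C'|+|X|-|W|$. Combining the two observations gives $|X+B+C|\le \K'|X+C|$, completing the induction.

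To finish, I would apply the claim with $C=mB$, which is finite and non-empty because $B$ is: this gives $|X+(m+1)B|=|X+B+mB|\le\K'|X+mB|$. Together with the base case $|X+B|=\K'|X|\le\K|X|$, induction on $m$ yields $|X+mB|\le\K'^m|X|\le\K^m|X|$ for all positive integers $m$, as required.

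I expect the main obstacle to be the verification of the two set-theoretic inclusions underpinning the inductive step — particularly the implication ``$x+c\in X+C'\Rightarrow x\in W$'' — and the careful bookkeeping in the two applications of inclusion–exclusion, including the harmless degenerate case $W=\emptyset$. The selection of the minimizing set $X$ and the reduction to the claim are routine; it is the claim itself, and the correct choice of the auxiliary set $W$, that constitute the heart of the argument.
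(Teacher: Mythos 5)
Your argument is correct and is precisely the proof the paper relies on: the paper does not prove Theorem \ref{thm:pr} itself but cites Theorem 3.1 of Petridis \cite{Petridis1}, and your choice of the ratio-minimizing set $X$, the key claim $|X+B+C|\le\K'|X+C|$ proved by induction on $|C|$ via the auxiliary set $W$, and the final specialization $C=mB$ reproduce Petridis's argument faithfully. No gaps.
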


\noindent In the same spirit, we shall first establish the following analogue of Theorem \ref{thm:pr} for closed subsets of compact abelian Lie groups. 

\begin{theorem}\label{thm:pr-closed}
Let $A,B$ be closed subsets of $\mb{T}^d\times Z$ satisfying $0<\mu(A+B)\leq \K\, \mu(A)$. Then for every $\epsilon>0$, for every sufficiently large $n\in \mb{N}$ there exists a non-empty closed subset $A'_n\subset A_n$ such that for every $m\in\mb{N}$ we have $\mu(A'_n+m B)\leq (1+\epsilon)^m\, \K^m\, \mu(A'_n)$.
\end{theorem}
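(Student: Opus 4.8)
The plan is to mimic Petridis's proof of Theorem~\ref{thm:pr}, replacing cardinalities by the Haar measure $\mu$, and to use the compactness of $A$ and $B$ (together with the passage from $A$ to $A_n$) to repair the two places where the finite argument does not transfer verbatim. First I would reduce to the set $A_n$. Since $A$ and $B$ are closed, hence compact, the sets $(A+B)_n=A_n+B$ decrease to $\overline{A+B}=A+B$ and the sets $A_n$ decrease to $A$, so by continuity of $\mu$ we have $\mu(A_n+B)\to\mu(A+B)\le\K\,\mu(A)$ and $\mu(A_n)\to\mu(A)$. Fixing $\delta>0$ with $(1+\delta)^2\le 1+\epsilon$, it follows that for all sufficiently large $n$ one has $\mu(A_n)>0$ (because $A_n$ has non-empty interior, as $A\neq\emptyset$) and $\mu(A_n+B)\le(1+\delta)\,\K\,\mu(A_n)$. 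From now on I work with the pair $(A_n,B)$ and the constant $(1+\delta)\K$.

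Following Petridis, I would define $A'_n$ by a minimisation. Let
$$\rho:=\inf\Big\{\tfrac{\mu(Y+B)}{\mu(Y)}\,:\ Y\subseteq A_n\ \text{closed},\ \mu(Y)>0\Big\}\,,$$
so that $\rho\le(1+\delta)\K$, and let $A'_n:=X$ be a closed subset of $A_n$ with $\mu(X)>0$ for which $\mu(X+B)/\mu(X)$ equals $\rho$, or, if the infimum is not attained, is at most $(1+\delta)\rho$; write $\K_*$ for this ratio, so $\K_*\le(1+\delta)^2\K\le(1+\epsilon)\K$. It then suffices to prove $\mu(X+mB)\le\K_*^{\,m}\,\mu(X)$ for every $m$, since this gives $\mu(A'_n+mB)\le(1+\epsilon)^m\K^m\,\mu(A'_n)$. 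The crucial ingredient, exactly as in Petridis, is the inequality $\mu(X+B+C)\le\K_*\,\mu(X+C)$ for every compact $C$: granting it, applying it with $C=mB$ and inducting on $m$ yields $\mu(X+(m+1)B)=\mu(X+B+mB)\le\K_*\,\mu(X+mB)\le\K_*^{\,m+1}\mu(X)$.

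To establish this inequality I would first treat finite $C$ by Petridis's inclusion–exclusion induction on $|C|$, with $\mu$ in place of cardinality. Writing $C=C'\cup\{c\}$, one sets $Z:=\{x\in X:\ x+B+c\subseteq X+B+C'\}$; this $Z$ is the intersection of $X$ with $\bigcap_{t\in B+c}\big((X+B+C')-t\big)$, hence is a closed subset of $X\subseteq A_n$, and therefore $\mu(Z+B)\ge\rho\,\mu(Z)$ by the choice of $X$ (trivially so if $\mu(Z)=0$). Using $\mu(U\cup V)=\mu(U)+\mu(V)-\mu(U\cap V)$ together with $Z+B+c\subseteq(X+B+C')\cap(X+B+c)$ and $(X+C')\cap(X+c)\subseteq Z+c$, one runs Petridis's computation to get $\mu(X+B+C)\le\K_*\,\mu(X+C)$ (with, if $X$ is only a near-minimiser, an error controlled by the slack $\delta$). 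Finally one upgrades from finite $C$ to compact $C$ — in particular to $C=mB$ — by approximating $C$ from inside by finite sets and from outside by its closed neighbourhoods and passing to the limit, exploiting the compactness of $X$, $B$ and $C$ and the continuity of $\mu$; it is here that the slack $\epsilon$ and the enlargement $A_n$ (rather than $A$) get used, to absorb the errors of the approximation.

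The step I expect to be the main obstacle is precisely this last one: Petridis's argument is genuinely an induction on the number of points of $C$, whereas $mB$ is typically infinite, and a naive limiting procedure fails because a dense subset of $X+C$ can have strictly smaller measure than $X+C$. Organising the approximation so that the constant $\K_*$ does not degrade — neither as the finite approximations are refined nor as $m$ increases — is the technical heart of the matter. A secondary point needing care (and a further reason for the $\delta$ above) is whether the infimum defining $\rho$ is attained by a closed set; if not, one must work with a near-minimiser and check that the resulting errors do not accumulate with $m$.
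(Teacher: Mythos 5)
You have correctly identified the obstruction yourself, and it is fatal to the proposal as written: the passage from finite $C$ to $C=mB$ in the key inequality $\mu(X+B+C)\leq \K_*\,\mu(X+C)$ is not a routine limit, and neither of the two approximation schemes you sketch closes it. Inner approximation by finite $C_k\subset mB$ increasing to a dense subset $C_\infty$ only controls $\mu(X+B+C_\infty)$, which (as you note) can be strictly smaller than $\mu(X+B+mB)$, so you never bound the quantity you need. Outer approximation $mB\subset C_k+U_k$ with $C_k$ finite and $U_k$ a small cube forces you to write $X+B+mB\subset (X+U_k)+B+C_k$ and then to invoke the key inequality with $X+U_k$ in place of $X$; but $X+U_k$ need not lie in $A_n$ and need not be a near-minimiser of $\mu(Y+B)/\mu(Y)$, so the inequality is simply not available for it. The fallback to a near-minimiser (needed because it is unclear that the infimum $\rho$ is attained by a closed set --- $\mu$ is not continuous along Hausdorff limits of closed sets) makes matters worse: Petridis's induction invokes the minimality of $X$ once per element of $C$, so a multiplicative slack $1+\delta$ in the ratio produces an additive error of order $\delta\,\K_*\,\mu(X)$ per step, hence a total error growing linearly in $|C_k|\to\infty$. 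None of these issues is addressed beyond being named, so the argument does not constitute a proof.

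The paper avoids all of this by refusing to redo Petridis's induction in the continuous setting. Instead it discretises: with $N=2n$ it covers $A_{2n}$ and $B_{2n}$ by translates of the cube $Q=[0,\tfrac1N)^d\times\{0_Z\}$ indexed by the finite group $\tfrac{1}{N}\mb{Z}^d_N\times Z$, obtaining index sets $D_A,D_B$ with $A_{2n}\subset D_A+Q\subset A_n$ and likewise for $B$. The measure hypothesis transfers to the cardinality bound $|D_A+D_B+(\{0,\tfrac1N\}^d\times\{0_Z\})|\leq(1+\epsilon)\,\K\,|D_A|$ (the extra $\{0,\tfrac1N\}^d$ fattening absorbs the boundary effect $Q+Q=[0,\tfrac2N)^d\times\{0_Z\}$), Theorem \ref{thm:pr} is then applied verbatim in the finite group to produce $D_{A'}\subset D_A$, and $A'_n:=D_{A'}+\overline{Q}$ is pulled back to the continuous setting, where the iterated sumset bound follows by comparing $A'_n+mB_{2n}$ with the corresponding discrete iterated sumset. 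In short, the finite theorem is used as a black box after a careful discretisation, which is precisely what sidesteps the infinite-$C$ induction and the attainment-of-infimum problem that block your route. If you want to salvage your approach, you would need a genuinely new idea for the limiting step; otherwise I recommend switching to the discretisation strategy.
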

\noindent Let us record a consequence that we shall use later to obtain Theorem \ref{thm:final-result2}.
\begin{corollary}\label{cor:cauchy}
Let $A,B$ be closed subsets of $\mb{T}^d$ satisfying $0<\mu(A+B)\leq \K\, \mu(A)$. Then for every positive integer $m$ such that $\K^m<1/\mu(A)$, we have $\mu(mB)\leq (\K^m-1) \mu(A)$.
\end{corollary}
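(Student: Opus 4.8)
The plan is to read off the corollary from Theorem~\ref{thm:pr-closed} together with the Cauchy--Davenport inequality for the torus: for non-empty closed sets $X,Y\subset\mb{T}^d$, Macbeath's theorem \cite{Macbeath} (a special case of \cite{Knes}) gives $\mu(X+Y)\ge\min\bigl(1,\mu(X)+\mu(Y)\bigr)$. I would first record two preliminary facts forced by the hypothesis $0<\mu(A+B)\le\K\,\mu(A)$: since $\K$ is finite we must have $\mu(A)>0$; and since $A+B\supset A+b$ for any $b\in B$ (note $B\neq\emptyset$), translation invariance of $\mu$ gives $\mu(A+B)\ge\mu(A)$, hence $\K\ge1$, so that $(1+\epsilon)^m\K^m\ge1$ for every $m\ge1$ and every $\epsilon\ge0$.

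Next, fixing $m$ with $\K^m<1/\mu(A)$, I would choose $\epsilon>0$ small enough that $(1+\epsilon)^m\K^m\,\mu(A)<1$, and then, invoking Theorem~\ref{thm:pr-closed} together with the convergence $\mu(A_n)\to\mu(A)$ (valid since $A$ is closed), choose $n$ large enough that there is a non-empty closed $A'_n\subset A_n$ with $\mu(A'_n+mB)\le(1+\epsilon)^m\K^m\,\mu(A'_n)$ and also $(1+\epsilon)^m\K^m\,\mu(A_n)<1$. Applying Cauchy--Davenport to the non-empty closed sets $A'_n$ and $mB$ yields $\mu(A'_n+mB)\ge\min\bigl(1,\mu(A'_n)+\mu(mB)\bigr)$, and the value $1$ is excluded here, since it would give $1\le\mu(A'_n+mB)\le(1+\epsilon)^m\K^m\,\mu(A'_n)\le(1+\epsilon)^m\K^m\,\mu(A_n)<1$. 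Hence $\mu(A'_n)+\mu(mB)\le(1+\epsilon)^m\K^m\,\mu(A'_n)$, and using $(1+\epsilon)^m\K^m-1\ge0$ and $\mu(A'_n)\le\mu(A_n)$ this rearranges to $\mu(mB)\le\bigl((1+\epsilon)^m\K^m-1\bigr)\mu(A_n)$. Letting $n\to\infty$ and then $\epsilon\to0^+$ gives the claimed bound $\mu(mB)\le(\K^m-1)\,\mu(A)$.

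The substantive content here is already packaged in Theorem~\ref{thm:pr-closed} and in the torus Cauchy--Davenport inequality, both available to us, so no serious obstacle remains; the only points needing a little care are the possibly degenerate case $\mu(A'_n)=0$ (which is harmless, as the displayed bound then just reads $\mu(mB)\le0$, consistent with $\K^m\ge1$) and the order in which the two limiting operations in $n$ and $\epsilon$ are taken.
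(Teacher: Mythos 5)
Your proposal is correct and follows essentially the same route as the paper: apply Theorem~\ref{thm:pr-closed}, use Macbeath's Cauchy--Davenport inequality for $\mb{T}^d$ on $A'_n$ and $mB$ (with the value $1$ ruled out by the hypothesis $\K^m<1/\mu(A)$), pass from $\mu(A'_n)$ to $\mu(A_n)$, and take the limits in $n$ and $\epsilon$. The only cosmetic difference is that the paper runs a single diagonal sequence $(\epsilon_j,n_j)_{j}$ instead of your two nested limits, and your extra remarks (that $\K\ge1$ and that $\mu(A'_n)=0$ would be harmless) are careful but not needed, since the paper notes $\mu(A'_n)>0$ from the proof of Theorem~\ref{thm:pr-closed}.
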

\noindent The condition $\K^m<1/\mu(A)$ is seen to be necessary by letting $A=B$ with $\mu(A)>1/2$, $m=1$, and $\K=\tfrac{1}{\mu(A)}<2$. We then have $\mu(A+B)=1=\K\, \mu(A)$, yet $\mu(B)>(\K-1)\mu(A)$.
\begin{proof}
Let $(\epsilon_j)_{j\in \mb{N}}$ be a decreasing sequence of positive numbers tending to 0, and let $(n_j)_{j\in \mb{N}}$ be a strictly increasing sequence of positive integers such that for every $j$ there exists a closed set $A'_{n_j}\subset A_{n_j}$ satisfying $\mu(A'_{n_j}+m B)\;\leq (1+\epsilon_j)^m \K^m \mu(A'_{n_j})$. From the assumption $\K^m< 1/\mu(A)$ it follows that $\mu(A'_{n_j}+m B)\leq (1+\epsilon_j)^m \K^m \mu(A'_{n_j})<1$ for $j$ sufficiently large, so we may apply Macbeath's analogue for $\mb{T}^d$ of the Cauchy-Davenport inequality, see Theorem 1 in \cite{Macbeath}, to deduce that $\mu(A'_{n_j}+m B)\geq \mu(A'_{n_j})+\mu(m B)$, whence $\mu(mB)\leq \big( (1+\epsilon_j)^m \K^m - 1\big)\, \mu(A'_{n_j}) \leq \big( (1+\epsilon_j)^m \K^m - 1\big)\, \mu(A_{n_j})$. Letting $j\to\infty$ and using the continuity of the Haar measure, the result follows.
\end{proof}

To prove Theorem \ref{thm:pr-closed} we begin with the following basic fact.

\begin{lemma}\label{lem:asym-closed-sum-approx}
Let $A,B\subset \mb{T}^d\times Z$ be closed sets. Then $\mu(A_n+B_n)\to \mu(A+B)$ as $n\to\infty$.
\end{lemma}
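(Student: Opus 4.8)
The plan is to prove the convergence $\mu(A_n+B_n)\to\mu(A+B)$ by exhibiting $(A_n+B_n)_{n\in\mb{N}}$ as a decreasing sequence of closed sets whose intersection is exactly $A+B$, and then invoking the continuity of the Haar measure on a decreasing sequence of measurable sets.

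First I would observe that each $A_n$ and $B_n$ is closed (hence compact), so each $A_n+B_n$ is the sum of two compact sets, hence compact, hence closed. Next, since the sequences $(A_n)_n$ and $(B_n)_n$ are each decreasing (as the boxes $[-\tfrac1n,\tfrac1n]^d\times\{0_Z\}$ are nested), the sumsets $(A_n+B_n)_n$ form a decreasing sequence as well: if $m\le n$ then $A_n\subseteq A_m$ and $B_n\subseteq B_m$, so $A_n+B_n\subseteq A_m+B_m$. By continuity of $\mu$ from above on this decreasing sequence of compact sets, $\mu(A_n+B_n)\to\mu\big(\bigcap_{n\in\mb{N}}(A_n+B_n)\big)$.

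It then remains to identify the intersection. The inclusion $A+B\subseteq\bigcap_n(A_n+B_n)$ is immediate since $A\subseteq A_n$ and $B\subseteq B_n$ for every $n$. For the reverse inclusion, suppose $x\in\bigcap_n(A_n+B_n)$, so for each $n$ we may write $x=a_n+b_n$ with $a_n\in A_n$, $b_n\in B_n$; writing $a_n=a_n'+u_n$ and $b_n=b_n'+v_n$ with $a_n'\in A$, $b_n'\in B$, and $u_n,v_n\in[-\tfrac1n,\tfrac1n]^d\times\{0_Z\}$, the error terms $u_n,v_n\to 0$. By compactness of $A$ and $B$, pass to a subsequence along which $a_n'\to a\in A$ and $b_n'\to b\in B$; then $x=\lim(a_n'+b_n'+u_n+v_n)=a+b\in A+B$. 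Hence $\bigcap_n(A_n+B_n)=A+B$, and the lemma follows.

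This argument is entirely routine; there is no real obstacle. The only point requiring a moment's care is the use of compactness of $A$ and $B$ to extract a convergent subsequence in the proof of the nontrivial inclusion — this is exactly why the hypothesis that $A$ and $B$ are \emph{closed} (equivalently compact, in a compact Hausdorff group) is needed, and it is the standard reason why sumsets of compact sets are compact. One could alternatively phrase the second-coordinate component $\{0_Z\}$ of the boxes separately since $Z$ is finite and discrete, but this adds nothing of substance.
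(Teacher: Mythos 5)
Your proof is correct and follows essentially the same route as the paper's: exhibit $(A_n+B_n)_{n}$ as a decreasing sequence of compact sets whose intersection is $A+B$, prove the nontrivial inclusion by a subsequence/compactness argument, and conclude by continuity of $\mu$. The only cosmetic difference is that you extract convergent subsequences from the points $a_n'\in A$, $b_n'\in B$ directly, whereas the paper first extracts them from $a_n\in A_n$, $b_n\in B_n$ and then passes to nearby points of $A$ and $B$; this changes nothing of substance.
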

\begin{proof}
It suffices to prove that $\bigcap_{n\in \mb{N}} (A_n+B_n)=A+B$. Indeed, since for each $n$ we have $A_n+B_n=A+B+\big([-\tfrac{2}{n},\tfrac{2}{n}]^d\times \{0_Z\}\big)$, the sequence of sets $(A_n+B_n)_{n\in \mb{N}}$ is decreasing, so the result would then follow by continuity of $\mu$. Clearly $\bigcap_{n\in \mb{N}} (A_n+B_n) \supset A+B$. To see the opposite inclusion, let $x\in \bigcap_{n\in \mb{N}} (A_n+B_n)$. For every $n$ let $a_n\in A_n$, $b_n\in B_n$ such that $x=a_n+b_n$. There is a convergent subsequence $(a_k)$ of $(a_n)$ and, within the resulting set of integers $k$, there is an infinite subset of integers $\ell$ such that $(b_\ell)$ converges as well. We thus have $a,b\in \mb{T}^d\times Z$ such that $a_\ell\to a$ and $b_\ell\to b$ as $\ell\to\infty$, and $a_\ell+b_\ell=x$ for every $\ell$. Since $a_\ell\in A_\ell$ and $b_\ell\in B_\ell$, by definition of these sets there exist $a_\ell'\in A$, $b_\ell'\in B$ such that $a_\ell-a_\ell'$ and $b_\ell-b_\ell'$ both  converge to $0$ as $\ell\to\infty$. Hence $a_\ell'\to a$ and $b_\ell'\to b$ as $\ell\to\infty$. Since $A,B$ are closed, we have $a\in A$ and $b\in B$. Hence $x=\lim_{\ell\to\infty} (a_\ell+b_\ell)=\lim_{\ell\to\infty} a_\ell+\lim_{\ell\to\infty} b_\ell = a+b\in A+B$, as required.
\end{proof}

\begin{proof}[Proof of Theorem \ref{thm:pr-closed}]
Fix any $\epsilon>0$. Since $\mu(A+B)>0$, by Lemma \ref{lem:asym-closed-sum-approx} there exists $n$ such that $\mu(A_n+B_n)\leq (1+\epsilon)\,\mu(A+B)$.  
We also have $\mu(A)>0$ so, since $\cap_n A_n=A$, we can also suppose that $\mu(A_n)\leq (1+\epsilon)\mu(A)$, by taking $n$ even larger if necessary.

Consider now $A_{2n}$, $B_{2n}$, which also satisfy $\mu(A_{2n}+B_{2n})\leq (1+\epsilon)\mu(A+B)$.\\
Let $N=2n$, and consider the following discrete subgroup of $\mb{T}^d\times Z$ (where we identify $\mb{T}^d$ as a set with $[0,1)^d$, and $\mb{Z}_N$ denotes the integers in $[0,N-1]$ with addition mod $N$):
\[
(\tfrac{1}{N}\mb{Z}^d_N)\times Z:=\big\{(\tfrac{j}{N},z):j\in\{0,\dots,N-1\}^d, z\in Z\big\}.
\]
We denote the small cube $\big[0,\tfrac{1}{N}\big)^d\times \{0_Z\}$ by $Q$, and define the following subsets of $\mb{T}^d\times Z$:
\begin{align*}
D_A & :=  \Big\{ \big(\tfrac{j}{N},z\big)\, \in \, (\tfrac{1}{N}\mb{Z}^d_N)\times Z ~:~ \Big(\big(\tfrac{j}{N},z\big) + Q\Big)\cap A_{2n} \neq \emptyset \Big\},\\
D_B & := \Big\{ \big(\tfrac{j}{N},z\big)\, \in \, (\tfrac{1}{N}\mb{Z}^d_N)\times Z ~:~ \Big(\big(\tfrac{j}{N},z\big) + Q\Big)\cap B_{2n} \neq \emptyset \Big\}.
\end{align*}
We claim that
\begin{equation}\label{eq:A2n}
A_{2n} \;\;\subset\;\; D_A+ Q \;\;\subset\;\; A_n.
\end{equation}
To see the first inclusion, note that for every $x\in A_{2n}$ there exists a unique $j\in [0,N)^d$ and $z\in Z$ such that $x\in\, \big(\tfrac{j}{N},z\big) + Q$, and then by definition we have $\big(\tfrac{j}{N},z\big)\in D_A$.  
To see the second inclusion, note that for every $x\in D_A+ Q$ there is $\big(\tfrac{j}{N},z\big)\in D_A$ such that $\big(\tfrac{j}{N},z\big) + Q$ contains both $x$ (by assumption) and also some $a\in A_{2n}$ (by definition of $D_A$). Therefore $x\; \in \; a+\big( \big(-\tfrac{1}{N},\tfrac{1}{N}\big)^d\times \{0_Z\}\big) \; \subset \; A_{2n}+\big(\big(-\tfrac{1}{N},\tfrac{1}{N}\big)^d\times \{0_Z\}\big)$. Since this holds for every such $x$, it follows that $D_A+Q \subset  A_{2n}+\big(\big(-\tfrac{1}{N},\tfrac{1}{N}\big)^d\times \{0_Z\}\big) \subset A_n$.

In exactly the same way, we obtain that
\begin{equation}\label{eq:B2n}
B_{2n} \;\;\subset\;\; D_B+ Q\;\; \subset\;\; B_n.
\end{equation}

We now claim that
\begin{equation}\label{eq:asimdiscsum}
\big|\,D_A \; + \; D_B +\big(\big\{0,\tfrac{1}{N}\big\}^d\times \{0_Z\}\big)\,\big| \leq (1+\epsilon) \,\K\, |D_A|.
\end{equation}
Indeed, the left side equals $N^d\, |Z|\, \mu\Big(D_A + D_B +\big(\big\{0,\tfrac{1}{N}\big\}^d\times \{0_Z\}\big) + Q \Big)$, which is
\[
N^d\, |Z|\, \mu\Big(D_A + D_B +\big([0,\tfrac{2}{N})^d\times \{0_Z\}\big)\Big) \;
 = \; N^d\, |Z| \, \mu\Big( D_A+ Q \; + \;D_B+ Q \Big).
\]
By \eqref{eq:A2n} and \eqref{eq:B2n}, this is at most $N^d\, |Z| \, \mu(A_n + B_n)$. By our choice of $n$ and our assumptions, this is at most $N^d\, |Z|\, (1+\epsilon)\, \mu(A+B) \; \leq \; N^d\, |Z| \, (1+\epsilon)\, \K\, \mu(A) \; \leq  \; N^d\, |Z| \, (1+\epsilon) \, \K\, \mu(A_{2n})$. By \eqref{eq:A2n} this is at most $N^d\, |Z|\, (1+\epsilon) \,\K\, \mu\big(D_A+ Q\big) =   (1+\epsilon)\, \K \, |D_A|$, and \eqref{eq:asimdiscsum} follows.

Now, given \eqref{eq:asimdiscsum}, we apply Theorem \ref{thm:pr} to $D_A$ and $D_B +\big(\big\{0,\tfrac{1}{N}\big\}^d\times \{0_Z\}\big)$ in the finite group $\tfrac{1}{N}\mb{Z}^d_N\times Z$, and we obtain a set $D_{A'}=D_{A',n}\subset D_A$ such that for every $m\geq 1$
\[
\big|D_{A'}+m \big(D_B+\big(\{0,\tfrac{1}{N}\}^d\times \{0_Z\}\big)\,\big)\big| \leq (1+\epsilon)^m\, \K^m\, |D_{A'}|.
\]
Let $A'_n=D_{A'}+\overline{Q}=D_{A'}+\big([0,\tfrac{1}{N}]^d\times \{0_Z\}\big)$, which is a closed subset of $A_n$. Using \eqref{eq:B2n} we have 
$A'_n+m B_{2n}\; \subset \; D_{A'}+\overline{Q}+ m\big(D_B\,+\,Q\big)=D_{A'} + m D_B+ \big(\big[0,\tfrac{m+1}{N}\big)^d\times \{0_Z\}\big)$, and this last set in turn is $D_{A'} + m D_B \, + \, \big(\big\{0,\tfrac{1}{N},\dots, \tfrac{m}{N}\big\}^d\times \{0_Z\}\big) \, + \, Q$, which equals $D_{A'} \, + \, m \big( D_B \,+ \,\{0,\tfrac{1}{N} \}^d\times \{0_Z\}\big) \, + \, Q$. Note that this last set has measure equal to $N^{-d}\;|Z|^{-1}\; \big|\,D_{A'} + m (D_B \, + \, \{0,\tfrac{1}{N} \big\}^d\times \{0_Z\}\big)\,\big|$. 
Hence
\begin{align*}
\mu(A'_n+m B)& \leq  \mu(A'_n+m B_{2n}) \; \leq \;  N^{-d}\;|Z|^{-1}\; \big|\,D_{A'} + m (D_B \, + \, \{0,\tfrac{1}{N} \big\}^d\times \{0_Z\}\big)\,\big| \\
& \leq  (1+\epsilon)^m\, \K^m\,N^{-d}\,|Z|^{-1}\, |D_{A'}|  = (1+\epsilon)^m \K^m \mu(A'_n).
\hspace{2.8cm}\qedhere\end{align*}
\end{proof}

\noindent To deduce Theorem \ref{thm:plunnecke-closed}, we emulate the argument from the discrete setting, which uses Ruzsa's triangle inequality. To do so we use the following generalization of this inequality, which follows directly from the proof of a more general version  by Tao (valid also in the non-commutative setting), namely  Lemma 3.2 in \cite{Tao-prod}.

\begin{lemma}\label{lem:ruzsa-triangle-inequality} Let $A_1,A_2,A_3$ be closed subsets of a compact abelian group with Haar measure $\mu$. Then
$\mu(A_1-A_3)\; \mu(A_2) \leq \mu(A_1-A_2)\;\mu(A_2-A_3)$.
\end{lemma}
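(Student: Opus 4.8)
The plan is to transport the classical double-counting proof of Ruzsa's triangle inequality to the measure setting, replacing the choice function by a measurable selection and replacing the cardinality bound coming from injectivity by the translation-invariance of Haar measure on $G\times G$. Since $A_1,A_2,A_3$ are closed, hence compact, each of $K:=A_1-A_3$, $A_1-A_2$ and $A_2-A_3$ is a compact (continuous image of a compact set) subset of $G$, in particular Haar-measurable; we may assume $A_1,A_2,A_3$ are non-empty and $\mu(A_2)>0$, since otherwise the desired inequality is trivial.

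First I would produce a Haar-measurable map $\sigma\colon K\to G$ with $\sigma(x)\in A_1$ and $\sigma(x)-x\in A_3$ for every $x\in K$. Such a selection exists because $S:=\{(x,a)\in G\times G: x\in K,\ a\in A_1,\ a-x\in A_3\}$ is closed and projects onto $K$, so, at least when $G$ is metrizable, the von Neumann selection theorem provides a universally (hence Haar-) measurable uniformization. Extend $\sigma$ arbitrarily to a measurable map $\tilde\sigma\colon G\to G$, and define
\[
\Psi\colon K\times A_2\longrightarrow (A_1-A_2)\times(A_2-A_3),\qquad \Psi(x,b)=\big(\sigma(x)-b,\ b-\sigma(x)+x\big);
\]
this lands in $(A_1-A_2)\times(A_2-A_3)$ precisely because $\sigma(x)-x\in A_3$. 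As in the finite proof, $\Psi$ is injective: if $\Psi(x,b)=(u,v)$ then $u+v=x$ recovers $x$, then $\sigma(x)$, then $b=\sigma(x)-u$; equivalently, on $\Psi(K\times A_2)$ the inverse of $\Psi$ is $(u,v)\mapsto\big(u+v,\ \tilde\sigma(u+v)-u\big)$.

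It then remains to upgrade injectivity into a measure inequality. Consider $\Theta\colon G\times G\to G\times G$ given by $\Theta(u,v)=\big(u+v,\ \tilde\sigma(u+v)-u\big)$. Writing $\Theta$ as the composition of the shear $(u,v)\mapsto(u+v,v)$ with the fibrewise translation $(w,v)\mapsto\big(w,\ (\tilde\sigma(w)-w)+v\big)$, and using translation-invariance of $\mu$ in each coordinate together with Fubini's theorem, one checks that $\Theta$ is a measurable bijection of $G\times G$ preserving $\mu\times\mu$, with measure-preserving inverse. Since $\Theta$ agrees with $\Psi^{-1}$ on $\Psi(K\times A_2)$ and carries that set bijectively onto $K\times A_2$, we have $\Psi(K\times A_2)=\Theta^{-1}(K\times A_2)$, and therefore
\[
\mu(A_1-A_3)\,\mu(A_2)=(\mu\times\mu)(K\times A_2)=(\mu\times\mu)\big(\Psi(K\times A_2)\big)\le \mu(A_1-A_2)\,\mu(A_2-A_3),
\]
using in the last step that $\Psi(K\times A_2)\subset (A_1-A_2)\times(A_2-A_3)$.

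I expect the only genuine obstacle to be the measurable selection step. For metrizable $G$ it is standard (von Neumann, or Kuratowski--Ryll-Nardzewski), and this already covers the case $G=\mb{T}^d\times Z$ needed in this section; for a general compact Hausdorff abelian group one can instead invoke Lemma 3.2 of \cite{Tao-prod} directly, whose proof the argument above essentially specializes, or reduce to the Lie (hence metrizable) case by the inverse-limit approximation of compact abelian groups. Everything else is routine bookkeeping with Haar measure on the product group.
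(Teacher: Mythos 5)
Your argument is correct in substance, but it takes a genuinely different route from the paper, which does not prove the lemma from scratch at all: it derives it from (the proof of) Lemma 3.2 of \cite{Tao-prod}. Tao's argument is a convolution computation with no selection anywhere: for $x=a_1-a_3\in A_1-A_3$ and any $b\in A_2$ one has $a_1-b\in A_1-A_2$ and $b-a_3\in A_2-A_3$ summing to $x$, whence $1_{A_1-A_2}\ast 1_{A_2-A_3}(x)\ge \mu(a_1-A_2)=\mu(A_2)$ pointwise on $A_1-A_3$; integrating over $x$ and using $\int 1_{A_1-A_2}\ast 1_{A_2-A_3}\ud\mu=\mu(A_1-A_2)\,\mu(A_2-A_3)$ gives the inequality. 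That proof works verbatim in any compact Hausdorff abelian (indeed any unimodular locally compact) group. Your proof instead transports the finite injection argument: the identity $\Theta\circ\Psi=\id$ on $K\times A_2$, the factorization of $\Theta$ into a shear and a fibrewise translation, and the resulting equality $(\mu\times\mu)(\Psi(K\times A_2))=(\mu\times\mu)(K\times A_2)$ are all checked correctly, and the conclusion follows. What this buys is a direct structural explanation of why the finite proof survives; what it costs is the measurable selection $\sigma$, which you rightly flag as the one nontrivial point. For metrizable $G$ the selection is standard (a continuous surjection of compact metric spaces admits a Borel section), and this covers the only place the paper uses the lemma, namely $G=\mb{T}^d\times Z$ in the proof of Theorem \ref{thm:plunnecke-closed}. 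But the lemma is stated for arbitrary compact abelian groups, where such selections are not automatic; of your two fallbacks, citing \cite{Tao-prod} simply replaces your whole argument by the paper's, while the inverse-limit reduction would need an approximation step in the spirit of Lemma \ref{lem:approx-general-cpct-gps} that you do not carry out. So as a self-contained proof of the stated lemma there remains a (clearly acknowledged and repairable) gap in the non-metrizable case; the convolution proof closes it for free.
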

\noindent The main result of this section can now be obtained.
\begin{proof}[Proof of Theorem \ref{thm:plunnecke-closed}] We apply Theorem \ref{thm:pr-closed} to $A$ and $B$ with any fixed $\epsilon>0$, and obtain that for all $n$ sufficiently large $\mu(A_n'+m B) \le (1+\epsilon)^m\, \K^m\, \mu(A_n')$, for some $A_n'\subset A_n$ closed and any integer $m\geq 0$. If one of $\ell$ or $m$ is 0, say $\ell=0$, then we have immediately $\mu(m B)\leq \mu(A_n'+m B) \leq (1+\epsilon)^m \K^m\mu(A_n)$, and so letting $n\to\infty$, using that $\cap_{n\ge 1} A_n = A$, and then letting $\epsilon\to 0$, we deduce that $\mu(m B)\leq \K^m \mu(A)$ as required. If $\ell,m$ are both positive, then by Lemma \ref{lem:ruzsa-triangle-inequality} applied with $A_1=\ell B$, $A_2=-A_n'$, $A_3=mB$, we have
\begin{align*}
\mu(\ell B-mB)\; \mu(A_n') & \leq \mu(\ell B+A_n')\; \mu(A_n'+mB)\; \leq \;  (1+\epsilon)^{\ell+m}\; \K^{\ell+m} \; \mu(A_n')^2\\
& \leq  (1+\epsilon)^{\ell+m}\; \K^{\ell+m} \; \mu(A_n')\; \mu(A_n).
\end{align*}
From the proof of Theorem \ref{thm:pr-closed} we have $\mu(A_n')>0$. Dividing by this and letting $n\to \infty$, we obtain $\mu(\ell B-mB) \le (1+\epsilon)^{\ell+m}\; \K^{\ell+m}\; \mu(A)$. Letting $\epsilon\to 0$, the result follows.
\end{proof}
\noindent To complete the proof of Theorem \ref{thm:final-result}, firstly, in the next section we approximate any compact abelian group by a Lie group in such a way that Theorem \ref{thm:plunnecke-closed} can be used to deduce the case of Theorem \ref{thm:final-result} for \emph{closed} sets. Then in Section \ref{sec:Gen}, using approximation results for $K$-analytic sets in Hausdorff spaces, we deduce Theorem \ref{thm:final-result} in full generality.

\section{Extension to closed subsets of compact abelian groups}\label{sec:approx}
\noindent Approximating compact groups by compact Lie groups is a standard technique,  and it has been used already in arithmetic combinatorics (e.g. in \cite{CSV}). However, here we shall need such approximations with the added guarantee that they behave well with respect to addition. We ensure this by working with closed sets, obtaining the following result.
\begin{lemma}\label{lem:approx-general-cpct-gps}
Let $G$ be a compact abelian group with Haar probability measure $\mu$, let $A,B$ be closed subsets of $G$, and let $\delta>0$. Then there exists a  compact abelian Lie group $G_0$, a continuous surjective homomorphism $q:G\to G_0$, and closed sets $A',B' \subset G_0$, such that $A\subset q^{-1}(A')$, $B\subset q^{-1}(B')$, $\mu(q^{-1}(A')\setminus A)<\delta$, and $\mu\big(q^{-1}(A'+B')\setminus (A+B)\big)<\delta$.
\end{lemma}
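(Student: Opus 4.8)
The plan is to exploit the structure theory of compact abelian groups: by the Peter-Weyl theorem (or equivalently, since the dual $\widehat{G}$ is discrete and the finite subsets of $\widehat{G}$ are directed by inclusion), the group $G$ is the inverse limit of its Lie quotients $G/H$, where $H$ ranges over the closed subgroups with $G/H$ a Lie group, and these $H$ form a neighbourhood basis of the identity. Concretely, for every open neighbourhood $U$ of $0_G$ there is such an $H$ with $H\subset U$; write $q=q_H\colon G\to G_0:=G/H$ for the quotient homomorphism, which is continuous and surjective, and $G_0$ is a compact abelian Lie group. I will then take $A'=q(A)$ and $B'=q(B)$. Since $A,B$ are closed, hence compact, and $q$ is continuous, $A'$ and $B'$ are compact, hence closed in $G_0$; moreover $A\subset q^{-1}(A')$ and $B\subset q^{-1}(B')$ trivially, and $q^{-1}(A')=A+H$, $q^{-1}(B')=B+H$, while $q^{-1}(A'+B')=q^{-1}(q(A+B))=A+B+H$.

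The heart of the matter is the measure estimates $\mu(A+H\setminus A)<\delta$ and $\mu\big((A+B+H)\setminus(A+B)\big)<\delta$, and for these I would use the same compactness-and-nested-intersection idea that appears in Lemma \ref{lem:asym-closed-sum-approx}. Fix a decreasing neighbourhood basis; for the Lie-quotient subgroups $H$ one has $\bigcap_{H} (A+H)=\overline{A}=A$ (using that $A$ is closed and that the $H$ shrink to $0_G$: if $x\in A+H$ for arbitrarily small $H$, choose $a_H\in A$ with $x-a_H\in H\to 0_G$, so $x$ is a limit of points of $A$, hence in $A$). Since the $H$ are directed downward, the sets $A+H$ are directed downward too, so by continuity of the Haar measure from above, $\mu(A+H)\to\mu(A)$; thus for $H$ small enough $\mu(A+H\setminus A)<\delta$. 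The same argument applied to the closed set $A+B$ (closed because it is a sum of compacts, as noted in the remark after Theorem \ref{thm:plunnecke-closed}) gives $\bigcap_H (A+B+H)=A+B$ and hence $\mu\big((A+B+H)\setminus(A+B)\big)<\delta$ for $H$ small enough. Finally, choosing $H$ small enough to satisfy both conditions simultaneously — which is possible since the $H$ are directed — yields the lemma, with $G_0=G/H$, $q=q_H$, $A'=q(A)$, $B'=q(B)$.

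One point requiring a little care, and the place I expect the main (mild) obstacle, is the assertion that the closed subgroups $H$ with $G/H$ a Lie group form a neighbourhood basis at $0_G$ that is directed downward: one needs that the intersection of two such $H_1,H_2$ is contained in a third such $H_3$ (indeed $G/(H_1\cap H_2)$ embeds in $G/H_1\times G/H_2$, a Lie group, and a closed subgroup of a Lie group is a Lie group, so one can take $H_3=H_1\cap H_2$), and that they shrink to $0_G$ (this is the standard consequence of Peter-Weyl: finitely many characters separate points near $0_G$, and their common kernel is such an $H$). Granting these standard structural facts, the rest is the routine compactness argument above. I should also remark that this refines the approximation result of \cite{CSV} precisely in that, by taking $A'=q(A)$ and $B'=q(B)$ rather than arbitrary approximants, we get $q^{-1}(A'+B')=A+B+H$ on the nose, so that controlling $\mu\big((A+B+H)\setminus(A+B)\big)$ reduces to the single nested-intersection computation for the closed set $A+B$, rather than requiring separate control of how $A'$ and $B'$ interact.
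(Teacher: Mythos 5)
Your proof is correct, but it takes a genuinely different route from the paper's. You obtain the Lie quotient from structure theory: the closed subgroups $H$ with $G/H$ a Lie group (equivalently, the annihilators of the finitely generated subgroups of $\widehat{G}$) form a downward-directed neighbourhood basis at $0_G$; you set $A'=q(A)$, $B'=q(B)$ for $q\colon G\to G/H$, and reduce everything to showing $\mu(K+H)<\mu(K)+\delta$ for a compact $K$ (applied to $K=A$ and $K=A+B$). The paper instead follows and refines Lemma A.2 of \cite{CSV}: it approximates $1_A$ and $1_{A+B}$ by trigonometric polynomials $P_1,P_2$ via Urysohn and Stone--Weierstrass, takes $\widehat{G_0}$ to be generated by the union of their spectra, and quotients by the annihilator; the refinement is that $P_i$ is uniformly close to $1$ on the set itself, forcing $A\subset q^{-1}(A')$ --- a containment you get for free from $A'=q(A)$. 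Your argument is shorter and avoids Fourier analysis; the paper's route buys continuity with \cite{CSV}, which it is explicitly modifying. One step you should justify: ``continuity of $\mu$ from above'' along the (generally uncountable) directed family $\{A+H\}$ does not follow from countable additivity alone; it holds because these sets are compact and $\mu$ is outer regular --- take open $U\supset A$ with $\mu(U)<\mu(A)+\delta$, note the compact sets $(A+H)\setminus U$ are downward directed with empty intersection, so some $A+H\subset U$. Equivalently, by compactness there is a neighbourhood $V$ of $0_G$ with $A+V\subset U$, and any $H\subset V$ works, bypassing the nested-intersection computation entirely.
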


\begin{remark}
The proof of this lemma will make it clear that we would be able to approximate simultaneously any finite number of sets, as well as combinations of them using sum and difference. For example, given closed sets $A_1,A_2,A_2$ we could obtain sets $A_i'$ in $G_0$ such that $A_i\subset q^{-1}(A_i')$, $\mu(q^{-1}(A_i')\setminus A_i)<\delta$ for $i=1,2,3$, and also $\mu\big(q^{-1}(A_1'+A_2')\setminus (A_1+A_2)\big)<\delta$ and $\mu\big(q^{-1}(A_1'+A_2'-2A_3')\setminus (A_1+A_2-2A_3)\big)<\delta$.
\end{remark}

\noindent To prove Lemma \ref{lem:approx-general-cpct-gps}, we first prove the following modification of Lemma A.2 in \cite{CSV}.

\begin{lemma}\label{lem:CSV} Let $G$ be a compact abelian group, let $A$ be a closed subset of $G$, and let $0<\delta<1/2$. Then there exists a compact abelian Lie group $G_0$ and a continuous surjective homomorphim $q:G\to G_0$ such that, letting $A'=q(A)$, we have $\mu(q^{-1}(A')\setminus A) < \delta$.
\end{lemma}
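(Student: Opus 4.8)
The plan is to approximate the closed set $A$ by a set that is, up to a small-measure error, a pullback from a Lie quotient. The key point is that a compact abelian group $G$ is the inverse limit of its Lie quotients: writing $\mathcal{N}$ for the family of closed subgroups $N\trianglelefteq G$ such that $G/N$ is a Lie group, the quotient maps $q_N\colon G\to G/N$ separate points, and the sets $q_N^{-1}(q_N(U))$ for open $U$ and $N\in\mathcal{N}$ form a neighbourhood base. (This is the standard Peter-Weyl / structure-theory fact also used in \cite{CSV}.) So first I would fix an open set $U\supset A$ with $\mu(U\setminus A)<\delta$, which exists by outer regularity of the Haar measure; more precisely, since $A$ is compact and $\mu$ is regular, choose $U$ open with $A\subset U$ and $\mu(U)<\mu(A)+\delta$.

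Next I would use compactness of $A$ together with the neighbourhood-base property to find a single $N\in\mathcal{N}$ such that $A+N\subset U$: for each $a\in A$ pick $N_a\in\mathcal{N}$ with $a+N_a+N_a\subset U$ (using that the $q_N^{-1}(\text{open})$ are a base and that $\mathcal{N}$ is directed, so finitely many of them have a common lower bound), cover $A$ by finitely many translates $a_i+N_{a_i}$, and take $N=\bigcap_i N_{a_i}$, which again lies in $\mathcal{N}$ because $\mathcal{N}$ is closed under finite intersections and $G/N$ embeds in the product of the $G/N_{a_i}$. Then $A+N\subset U$. Now set $G_0=G/N$, let $q=q_N$, and put $A'=q(A)$, which is closed (indeed compact) since $A$ is compact and $q$ is continuous. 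Since $q^{-1}(A')=q^{-1}(q(A))=A+N\subset U$, we get $\mu(q^{-1}(A')\setminus A)\le\mu(U\setminus A)<\delta$, as required.

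The only genuinely substantive ingredient is the structural input that $\mathcal{N}$ is a directed family (under reverse inclusion) closed under finite intersections, with $\bigcap_{N\in\mathcal{N}}N=\{0\}$ and with $\{q_N^{-1}(V):N\in\mathcal{N},\,V\subset G/N\text{ open}\}$ a base for the topology of $G$ — equivalently, that $G=\varprojlim_{N\in\mathcal{N}}G/N$ as topological groups. This is exactly the content invoked in Lemma A.2 of \cite{CSV}, and here I would simply cite it (or the relevant passage of \cite{H&Mo}); the modification over \cite{CSV} is only the cosmetic one of starting from an open superset of $A$ of controlled measure rather than from an $\epsilon$-neighbourhood, so that the conclusion is phrased directly in terms of $\mu(q^{-1}(A')\setminus A)$. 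I expect the main (mild) obstacle to be bookkeeping the compactness argument that produces a \emph{single} $N$ working for all of $A$ simultaneously; everything else is regularity of Haar measure and elementary manipulation of quotients.
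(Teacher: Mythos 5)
Your argument takes a genuinely different route from the paper. The paper follows Lemma A.2 of \cite{CSV}: it uses Urysohn's lemma to get a continuous $h$ with $1_A\le h\le 1_U$, approximates $h$ in $L^\infty$ by a trigonometric polynomial $P$, takes $\widehat{G_0}$ to be the subgroup of $\widehat G$ generated by the spectrum of $P$, and reads off the measure bound from $\|P_0-P_0^2\|_{L^1}$ being small; the one refinement over \cite{CSV} is that $|P(a)-1|$ is small \emph{for every} $a\in A$, which forces $q(A)\subset A_0$ and hence the one-sided estimate $\mu(q^{-1}(A')\setminus A)<\delta$. You instead invoke the structure theorem directly (arbitrarily small closed subgroups $N$ with $G/N$ Lie, i.e.\ $G=\varprojlim G/N$), trap $A+N$ inside an open $U$ with $\mu(U\setminus A)<\delta$, and use $q^{-1}(q(A))=A+N$. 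Your route is shorter and makes the containment $A\subset q^{-1}(A')$ tautological; the two approaches rest on the same duality fact (finitely generated subgroups of $\widehat G$ exhaust $\widehat G$), packaged qualitatively in your case and quantitatively in the paper's. The paper's Fourier-analytic packaging does pay off in Lemma \ref{lem:approx-general-cpct-gps}, where one simply takes the subgroup generated by the union of two spectra to approximate $A$ and $A+B$ simultaneously, but your argument adapts to that just as easily (intersect the two subgroups $N$).

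One step of your write-up does not work as stated: you cover $A$ by the translates $a_i+N_{a_i}$ and extract a finite subcover by compactness, but the subgroups $N_a$ are closed, not open (in a connected non-Lie group such as $\mb{T}^{\mb{N}}$ no proper closed subgroup is open), so $\{a+N_a\}_{a\in A}$ is not an open cover and compactness gives nothing directly. The standard repair: for each $a$ choose an \emph{open} neighbourhood $V_a\ni 0$ with $a+V_a+V_a\subset U$, extract a finite subcover of $A$ from the open sets $a+V_a$, set $W=\bigcap_i V_{a_i}$ so that $A+W\subset U$, and only then pick $N\in\mc{N}$ with $N\subset W$. With that fix (and the observation you already make, that $G/\bigcap_i N_i$ embeds in $\prod_i G/N_i$ and so finite intersections stay in $\mc{N}$), the proof is complete.
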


\begin{proof} By regularity of $\mu$, there is an open set $U\supset A$ such that $\mu(U\setminus A)<\delta^3/2^{10}$. By Urysohn's lemma (see Theorem 32.3 and Theorem 33.1 in \cite{munkres}) there is a continuous function $h:G \to [0,1]$ such that $h(x)=1$ for all $x\in A$ and $h(x)=0$ for all $x\notin U$. Then
\[
\|1_A-h\|_{L^{1}(G)} = \int_A|1_A-h|\,\ud\mu+\int_{U\setminus A}|1_A-h|\,\ud\mu + \int_{U^c} |1_A-h|\,\ud\mu.
\]
The first and last integrals are 0, and the second one is at most $\mu(U\setminus A)<\delta^3/2^{10}$.

By the Stone-Weierstrass theorem, there is a trigonometric polynomial $P(x)$ such that $\|h-P\|_{L^{\infty}(G)}<\delta^3/2^{10}$ (see p. 24 in \cite{rudin}), whence $\|P-1_A\|_{L^{1}(G)}<\delta^3/2^{9}$. By the triangle inequality we also have $\|P\|_{L^{\infty}(G)}<2$. Here the proof differs from that of Lemma A.2 in \cite{CSV}: here $|P(a)-1|<\delta^3/2^{10}$  holds \emph{for all} $a\in A$ (we use this at the end of the proof).

Let $\widehat{G}$ be the dual group of $G$ and let $\widehat{G_0}$ be the subgroup of $\widehat{G}$ generated by the spectrum of $P$, i.e., by the finite set $\{\gamma\in \widehat{G}:\widehat{P}(\gamma) \not=0\}$. Then $\widehat{G_0}$ is a finitely generated discrete abelian group, therefore it is the dual of a compact abelian Lie group $G_0$. Letting $\Lambda$ be the annihilator of $\widehat{G_0}$ ($\Lambda$ a closed subgroup of $G$), we have that $G_0$ is isomorphic as a compact abelian group to $G/\Lambda$ (see \cite{rudin} section 2.1), so the map $G\to G/\Lambda$ gives a continuous surjective homomorphism $q:G\to G_0$. Then there exists a trigonometric polynomial $P_0$ on $G_0$ with $P=P_0 \circ q$, whence $\|P_0\|_{L^{\infty}(G_0)}\le 2$. Moreover, writing $P-P^2=P-1_A+1_A^2-P^2$, we have
\begin{align*}
\left\|P_0-P_0^2\right\|_{L^1(G_0)} & = \left\|P-P^2\right\|_{L^1(G)} \\ 
& \le \int_G |1_A-P|\,\ud\mu_G+\int_G|1_A-P|\, |1_A+P|\, \ud\mu_G < \delta^3/2^7.
\end{align*}
Therefore, the set $D:=\{x\in G_0:|P_0(x)-P_0^2(x)|>\delta^2/2^4\}$ has measure at most $\delta/8$. For every $x$ in the complement $D^c = G_0\setminus D$, we must have $|P_0(x)|\le \delta/4$ or $|1-P_0(x)|\le \delta/4$.

Now let $A_0:=\{x\in G_0:|P_0(x)-1|\le \delta/4\}$. We have that $\|1_{A_0}-P_0\|_{L^1(G_0) }$ is at most
\[
3\int_{D} \ud\mu_{G_0}+\int_{A_0\cap D^c}|1-P_0(x)|\ud\mu_{G_0}+\int_{A_0^c\cap D^c}|P_0(x)|\ud\mu_{G_0}< 7\delta/8,
\]
so $\mu_G(A\bigtriangleup q^{-1}(A_0)) \le \|1_A-P\|_{L^1}+\|P-1_{A_0} \circ q\|_{L^1} = \|1_A-P\|_{L^1(G)}+\|1_{A_0}-P_0\|_{L^1(G_0)}<\delta $.

Now note that by definition of $A_0$ it is clear that $A':=q(A)$ is included in $A_0$, because $|1-P(a)|<\delta^3/2^{10}$ for all $a\in A$. So indeed, we have that $\mu_G( q^{-1}(A_0)\setminus A)<\delta$. Moreover, instead of taking $A_0$ as our approximating set, we can just take $A'$, since $A\subset q^{-1}(A') \subset q^{-1}(A_0)$.
\end{proof}

\begin{proof}[Proof of Lemma \ref{lem:approx-general-cpct-gps}] 
First, by the same argument as in the proof of Lemma \ref{lem:CSV} applied to   the sets $A$ and $A+B$, we find polynomials $P_1$ and $P_2$ that yield the  approximations for $A$ and $A+B$ respectively. Then, to define  $\widehat{G_0}$, instead of the spectrum of $P$ as in the previous proof, now we take $\widehat{G_0}$ to be the subgroup generated by the union of the spectra of $P_1$ and $P_2$, that is $\{\gamma\in \widehat{G}:\widehat{P_i}(\gamma) \not=0\textrm{ for $i=1$ or $2$} \}$. This is again a finite set, so $G_0$ is finitely generated as required. We then obtain the desired  approximation simultaneously for $A$ and $A+B$, namely that $\mu\big((q^{-1}q(A))\setminus A\big)$ and $\mu\big((q^{-1}q(A+B)) \setminus (A+B)\big)$ are both less than $\delta$. Then letting $A'=q(A)$ and $B'=q(B)$ and using that $q$ commutes with addition, the result follows.
\end{proof}
\noindent We can now obtain the claimed special case of Theorem \ref{thm:final-result}.

\begin{proof}[Proof of Theorem \ref{thm:final-result} for closed sets]
Let $A,B$ be closed sets in the compact abelian group $G$ such that $0<\mu(A+B)\leq \K\, \mu(A)$; in particular $\mu(A)>0$. Fix an arbitrary small $\delta > 0$, and apply Lemma \ref{lem:approx-general-cpct-gps} to obtain the corresponding approximating sets $A',B' \subset G_0$. Then we have $0<\mu(A+B)\leq \mu\big(q^{-1}(A')+q^{-1}(B')\big)= \mu\big(q^{-1}(A'+B')\big)<\mu(A+B)+\delta$, and $\mu(A+B)\leq \alpha\, \mu(A)\leq \alpha\, \mu(q^{-1}(A'))$. 
Letting $\mu_0$ denote the Haar  measure on $G_0$, by the basic fact that the continuous surjective homomorphism $q$ preserves the Haar measures (i.e. $\mu\circ q^{-1} = \mu_0$), we have $0<\mu_0(A'+B') \leq \Big(\K+\tfrac{\delta}{\mu(A)}\Big)\,\mu_0(A')$, where in the last inequality we used that $\mu_0(A')\geq\mu(A)$.

Applying Theorem \ref{thm:plunnecke-closed}, we obtain $\mu_0(mB'-nB') \leq \big(\K+\delta/\mu(A)\big)^{m+n}\mu_0(A')$, which implies that $\mu(q^{-1}(mB'-nB')) \leq \big(\K+\delta/\mu(A)\big)^{m+n}\mu(q^{-1}(A'))$. Since $mB-nB \subset q^{-1}(mB'-nB')$, by Lemma \ref{lem:approx-general-cpct-gps} we have $\mu(mB-nB) \le \big(\K+\delta/\mu(A)\big)^{m+n}\,\big(\mu(A)+\delta\big)$. Letting $\delta\to 0$, the result follows.
\end{proof}
A similar argument yields the following extension of Corollary \ref{cor:cauchy}.

\begin{corollary}\label{cor:cauchy2}
Let $A,B$ be closed subsets of a connected compact abelian group satisfying $0<\mu(A+B)\leq \K\,\mu(A)$. Then for every $m\in \mb{N}$ such that $\K^m<1/\mu(A)$, we have
\[
\mu(mB)\leq (\K^m-1) \mu(A).
\]
\end{corollary}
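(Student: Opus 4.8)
The plan is to deduce Corollary \ref{cor:cauchy2} from Corollary \ref{cor:cauchy} by the same reduction to the Lie group case that was used in the proof of Theorem \ref{thm:final-result} for closed sets. First I would recall that a connected compact abelian group $G$ is approximated by compact abelian Lie groups that are themselves connected: indeed, the quotient $G_0 = G/\Lambda$ produced by Lemma \ref{lem:CSV} (and hence by Lemma \ref{lem:approx-general-cpct-gps}) is a continuous homomorphic image of the connected group $G$, so $G_0$ is connected, and being a connected compact abelian Lie group it is isomorphic to a torus $\mb{T}^d$. This is the point where one must be slightly careful: Corollary \ref{cor:cauchy} is stated for $\mb{T}^d$ only, relying on Macbeath's torus analogue of the Cauchy--Davenport inequality, so it is essential that the approximating Lie group has no nontrivial finite factor.

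The main steps would then be as follows. Fix $m$ with $\K^m < 1/\mu(A)$ and fix a small $\delta>0$. Apply Lemma \ref{lem:approx-general-cpct-gps} to the closed sets $A,B$ to obtain a connected compact abelian Lie group $G_0\cong\mb{T}^d$, a continuous surjective homomorphism $q:G\to G_0$, and closed sets $A',B'\subset G_0$ with $A\subset q^{-1}(A')$, $B\subset q^{-1}(B')$, $\mu\big(q^{-1}(A')\setminus A\big)<\delta$ and $\mu\big(q^{-1}(A'+B')\setminus(A+B)\big)<\delta$. Writing $\mu_0$ for the Haar measure on $G_0$ and using $\mu\circ q^{-1}=\mu_0$ together with $q^{-1}(A')+q^{-1}(B')=q^{-1}(A'+B')$, exactly as in the proof of Theorem \ref{thm:final-result} for closed sets, one gets $0<\mu_0(A'+B')\leq \big(\K + \delta/\mu(A)\big)\,\mu_0(A')$ and $\mu_0(A')\le \mu(A)+\delta$. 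For $\delta$ small enough we still have $\big(\K+\delta/\mu(A)\big)^m < 1/\mu_0(A')$, since $\mu_0(A')\to\mu(A)$ and $\K^m<1/\mu(A)$ is a strict inequality; so Corollary \ref{cor:cauchy} applies in $\mb{T}^d$ and gives $\mu_0(mB')\leq \big((\K+\delta/\mu(A))^m - 1\big)\,\mu_0(A')$. Finally, since $mB\subset q^{-1}(mB')$, we have $\mu(mB)\le \mu_0(mB')\le \big((\K+\delta/\mu(A))^m-1\big)\big(\mu(A)+\delta\big)$; letting $\delta\to 0$ yields $\mu(mB)\le (\K^m-1)\mu(A)$.

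The one genuine obstacle, modest but worth spelling out, is ensuring the strict inequality $\K^m<1/\mu(A)$ survives the approximation, i.e. that the perturbed quantities $\K+\delta/\mu(A)$ and $\mu_0(A')\le\mu(A)+\delta$ still satisfy $\big(\K+\delta/\mu(A)\big)^m<1/\mu_0(A')$; this holds by continuity in $\delta$ once we note the original inequality is strict, so it is safe to pass to the limit. Everything else is a direct transcription of the closed-set argument for Theorem \ref{thm:final-result}, with Theorem \ref{thm:plunnecke-closed} replaced by Corollary \ref{cor:cauchy} and with the extra observation that connectedness of $G$ forces connectedness of $G_0$.
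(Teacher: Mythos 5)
Your proposal is correct and is essentially the paper's own argument: the paper likewise fixes $\delta$ small enough that $(\K+\delta/\mu(A))^m<1/(\mu(A)+\delta)$, repeats the closed-set reduction via Lemma \ref{lem:approx-general-cpct-gps}, and uses that $G_0$ is a connected compact abelian Lie group, hence a torus, so that Corollary \ref{cor:cauchy} applies. Your explicit attention to preserving the strict inequality under the $\delta$-perturbation is exactly the point the paper's one-line proof relies on.
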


\begin{proof}
We take $\delta>0$ so small that $\delta<\mu(A)$ and $(\K+\delta/\mu(A))^m<1/(\mu(A)+\delta)$. We can then argue as in the last proof, using the additional fact that the group $G_0$, being here a \emph{connected} compact abelian Lie group (by continuity of $q$ and connectedness of $G$), must be a torus $\mb{T}^d$, so that we can apply Corollary \ref{cor:cauchy}. 
\end{proof}

\section{Extension to $K$-analytic sets}\label{sec:Gen}
\noindent The classical definition of analytic sets, originating in work of Lusin and Souslin from 1917 \cite{Lus,Sous}, essentially concerned the type of spaces now known as Polish spaces. Let us recall the classical definition in this setting (see \cite{cohn}):   a subset $A$ of a Polish space $X$ is said to be an \emph{analytic set} if there is a Polish space $Y$ and a continuous function $f:Y\to X$ such that $f(Y)=A$. 

In the more general setting of Hausdorff spaces, as we recalled in the introduction, Choquet gave a fruitful definition of analytic sets that extends the classical one. For convenience we recall this definition here, but in a  slightly different form due to Sion, see Definition 2.1 in \cite{sion}. In Hausdorff spaces, the definitions of Choquet and Sion are in fact equivalent, as was shown by Jayne in \cite{jayne}. Recall that a subset $B$ of a topological space $X$ is a $K_{\sigma \delta}$ set if we have $B= \bigcap_{i\in \mb{N}}\bigcup_{j\in\mb{N}} K_{i,j}$, for compact sets $K_{i,j}\subset X$.

\begin{defn}[$K$-analytic set]\label{def:Sion-ana}
Let $X$ be a Hausdorff space. A set $A\subset X$ is a \emph{$K$-analytic set} if there is a $K_{\sigma \delta}$ set $B$ in a Hausdorff space and a continuous map $f:B\to X$ such that $A=f(B)$.
\end{defn}
\noindent The reference \cite{Rog} provides a detailed introduction to analytic sets, including historical background on the evolution of this notion.

In this section we extend the main result of the previous section to all $K$-analytic sets in a compact abelian group, thus completing the proof of Theorem \ref{thm:final-result}. Before we do so, let us briefly illustrate some consequences of this extension. 

When $X$ is a Polish space, Definitions \ref{def:Choquet-ana} and \ref{def:Sion-ana} are equivalent to the classical definition of analytic sets. Indeed, it is a basic fact that analytic sets in Polish spaces are continuous images of the set $\mc{I}=(0,1)\setminus\mb{Q}$ (see Proposition 8.2.7 and Example 6, p. 255 in \cite{cohn}), and it is not hard to see that $\mc{I}$ is a $K_{\sigma \delta}$ set. As mentioned in the introduction, we also have that in a Polish space all Borel sets are analytic (see Proposition 8.2.3 in \cite{cohn}). The extension of the Pl\"unnecke-Ruzsa inequality that we obtain in this section thus applies in particular to all Borel sets in any Polish compact abelian group. The family of Polish compact abelian groups contains every metrizable compact abelian group, see Corollary D.40 in \cite{cohn} (this includes for instance the Lie groups from Section \ref{sec:closed}, but also more general groups, for example $\mb{T}^\mb{N}$).

We now turn to the proof of Theorem \ref{thm:final-result}. Note first that it follows in a straightforward way from the distributivity of Cartesian products across unions and intersections that the Cartesian product of two $K$-analytic sets is $K$-analytic. From this it then follows, by continuity of addition, that if $A,B$ are $K$-analytic sets in $G$ then so is $A+B$. 

To complete the proof of the theorem, we shall use the following measure-theoretic property of $K$-analytic sets, which is a small modification (and follows from the proof) of Theorem 4.2 in \cite{sion}. 

\begin{theorem}\label{thm:sion} Let $X$ be a Hausdorff space, let $A_0$ be a $K_{\sigma \delta}$ set in some Hausdorff space, let $f:A_0\rightarrow X$ be a continuous map, let  $A$ be the $K$-analytic set $f(A_0)$ in $X$, and let $\mu$ be an outer measure on $X$. Then for every $\delta >0$ there is a compact set $C\subset A_0$ such that $\mu(f(C)) > \mu(A)-\delta$.
\end{theorem}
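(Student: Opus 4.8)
The plan is to prove this by a capacitability-type argument (in the spirit of Choquet's capacitability theorem), applied to the monotone, countably subadditive set function $E\mapsto\mu(f(E))$. Fix a representation $A_0=\bigcap_{i\in\mb{N}}\bigcup_{j\in\mb{N}}K_{i,j}$ with each $K_{i,j}$ compact in a Hausdorff space $Y$. Replacing $K_{i,j}$ by $K_{i,1}\cup\cdots\cup K_{i,j}$ (a finite union of compacts, hence compact), we may assume $K_{i,j}\subset K_{i,j+1}$ for all $i,j$. Fix $\delta>0$, pick $\delta_i>0$ with $\sum_i\delta_i<\delta$, and assume $\mu(A)<\infty$ (the case $\mu(A)=\infty$ is handled analogously, keeping the relevant quantities above $1/\delta$).

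The heart of the argument is a greedy choice of one index at each level. Set $S_0:=A_0$, so $\mu(f(S_0))=\mu(A)$. Having chosen $j_1,\dots,j_{n-1}$, put $S_{n-1}:=A_0\cap\bigcap_{i<n}K_{i,j_i}$. Since $S_{n-1}\subset A_0\subset\bigcup_jK_{n,j}$ and the $K_{n,m}$ increase with $m$, the sets $S_{n-1}\cap K_{n,m}$ increase to $S_{n-1}$, and hence $f(S_{n-1}\cap K_{n,m})$ increases to $f(S_{n-1})$; using continuity from below of $\mu$ I would choose $j_n$ large enough that $\mu\big(f(S_{n-1}\cap K_{n,j_n})\big)>\mu(f(S_{n-1}))-\delta_n$, and set $S_n:=S_{n-1}\cap K_{n,j_n}=A_0\cap\bigcap_{i\le n}K_{i,j_i}$. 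By induction, $\mu(f(S_n))>\mu(A)-\sum_{i\le n}\delta_i\ge\mu(A)-\sum_i\delta_i$ for every $n$, a quantity strictly larger than $\mu(A)-\delta$.

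Next I would pass to the limit. Let $B_n:=\bigcap_{i\le n}K_{i,j_i}$, a decreasing sequence of compact sets, and $C:=\bigcap_nB_n=\bigcap_iK_{i,j_i}$. Then $C\subset K_{i,j_i}\subset\bigcup_jK_{i,j}$ for every $i$, so $C\subset\bigcap_i\bigcup_jK_{i,j}=A_0$; and $C$, being a closed subset of the compact set $K_{1,j_1}$, is compact. Since $S_n=A_0\cap B_n$ we have $C=\bigcap_nS_n$, and the crucial identity is $f(C)=\bigcap_nf(S_n)$. The inclusion $\subset$ is clear; for the reverse, given $y$ lying in every $f(S_n)$, pick $z_n\in S_n$ with $f(z_n)=y$; the sequence $(z_n)$ lies in the compact set $B_1$, hence has a cluster point $z$, which lies in each closed set $B_m$ (since $z_n\in B_m$ for all $n\ge m$), so $z\in\bigcap_mB_m=C\subset A_0$; continuity of $f$ on $A_0$ and the Hausdorff property of $X$ then force $f(z)=y$, giving $y\in f(C)$.

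Finally, all the images occurring above — each $f(S_{n-1}\cap K_{n,m})$, each $f(S_n)$, and $f(C)$ — are $K$-analytic (continuous images of $K_{\sigma\delta}$ sets, an intersection of a $K_{\sigma\delta}$ set with a compact set being again $K_{\sigma\delta}$), hence Haar-measurable. Since the Haar measure is finite, continuity from above along the decreasing sequence $\big(f(S_n)\big)_n$ yields $\mu(f(C))=\lim_n\mu(f(S_n))\ge\mu(A)-\sum_i\delta_i>\mu(A)-\delta$, which is the claim. The two substantive inputs are thus the continuity of $\mu$ from below along increasing measurable sets (in the greedy step) and from above along decreasing measurable sets of finite measure (in the last step); these hold for the Haar measure on a compact abelian group, the case we need, and for a general outer measure they are the assumptions under which the corresponding statement is established in \cite{sion}. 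I expect the main obstacle, beyond invoking these continuity properties, to be the bookkeeping that simultaneously keeps the limiting set $C=\bigcap_iK_{i,j_i}$ inside $A_0$ and prevents the image measure from collapsing in the passage to the limit; passing to a representation of $A_0$ with increasing inner unions is exactly what makes the two requirements compatible.
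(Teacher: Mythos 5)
The paper gives no proof of this statement: it is imported as ``follows from the proof of Theorem 4.2 in \cite{sion}'', so there is nothing in the text to compare against line by line. What you have written is essentially a reconstruction of that proof, namely the Choquet--Sion capacitability argument: make the inner unions increasing, greedily pick one compact $K_{i,j_i}$ per level so that the image measure drops by at most $\delta_i$, and show that $C=\bigcap_i K_{i,j_i}$ is a compact subset of $A_0$ with $f(C)=\bigcap_n f(S_n)$. The topological core is correct as written: you rightly work with cluster points rather than convergent subsequences (general compact Hausdorff spaces need not be sequentially compact), the inclusion $C\subset A_0$ is justified, and the identity $f(C)=\bigcap_n f(S_n)$ is proved correctly.

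The one step that genuinely deviates from the standard argument, and narrows its scope, is the ending: you apply continuity from above to the decreasing sequence of \emph{non-compact} sets $f(S_n)$, and continuity from below in the greedy step. For a bare outer measure (monotone, countably subadditive, vanishing on $\emptyset$) neither is available, and some such hypothesis is genuinely needed: on $X=\mb{N}$ discrete, setting $\mu(E)=1$ for $E$ finite nonempty and $\mu(E)=2$ for $E$ infinite gives an outer measure for which $A=\mb{N}$ is $K$-analytic ($\mb{N}=\bigcap_i\bigcup_j\{1,\dots,j\}$) yet every compact subset has measure $1<\mu(A)$. So your closing hedge (``these are the assumptions under which the statement is established in \cite{sion}'') is not optional; it is doing real work, and the proof as written establishes the theorem only for outer measures with those continuity properties and for which the sets $f(S_n)$ are measurable. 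The classical proof avoids touching non-compact sets at the end: the same cluster-point reasoning shows that every open $U\supset f(C)$ contains $f(S_n)$ for all large $n$, and one concludes using outer regularity of $\mu$ on the compact set $f(C)$. For the only case the paper needs --- the Haar outer measure on a compact group --- your shortcut is legitimate, since each $f(S_n)$ is $K$-analytic (an intersection of a $K_{\sigma\delta}$ set with finitely many compacts is again $K_{\sigma\delta}$), hence Haar measurable by Theorem 4.3 of \cite{sion}, which the paper invokes anyway, and the measure is finite, so continuity from below and above reduce to countable additivity. I would either state explicitly that you are proving the theorem under Sion's hypotheses on $\mu$, or switch to the open-neighbourhood/outer-regularity ending, which asks less of $\mu$.
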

\noindent The standard definition of an outer measure (or Carath\'eodory measure) can be recalled from the same paper; see Definition 4.1 in \cite{sion}. We shall use the fact that the Haar measure $\mu$ on a compact abelian group is a restriction of an outer measure (namely the outer Haar measure) to the Haar-measurable sets, and the fact that $K$-analytic subsets of $G$ are Haar measurable (which follows from Theorem 4.3 of \cite{sion}). With these facts we can prove the following lemma, which plays a key role in the proof of Theorem \ref{thm:final-result}.

\begin{lemma}\label{lem:keyana}
Let $G$ be a compact abelian group, and let $B$ be a $K$-analytic subset of $G$. Then for all non-negative integers $m,n$ we have
\begin{equation}\label{eq:keyana}
\mu(mB-nB)=\sup_{D\subset B,\; D\textup{ compact}} \mu(mD-nD).
\end{equation}
\end{lemma}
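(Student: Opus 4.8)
The plan is to prove \eqref{eq:keyana} by establishing two inequalities. The inequality $\mu(mB-nB)\geq \sup_{D\subset B,\,D\textup{ compact}} \mu(mD-nD)$ is immediate: for any compact $D\subset B$ we have $mD-nD\subset mB-nB$, and $mD-nD$ is compact (hence Haar-measurable), so monotonicity of $\mu$ gives $\mu(mD-nD)\leq \mu(mB-nB)$; taking the supremum over all such $D$ yields the bound. The substance of the lemma is therefore the reverse inequality, namely that compact subsets of $B$ can be used to approximate $\mu(mB-nB)$ from below.

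For the reverse inequality, first I would fix $\delta>0$ and unpack the definition: since $B$ is $K$-analytic, there is a $K_{\sigma\delta}$ set $B_0$ in some Hausdorff space $Y$ and a continuous map $f\colon B_0\to G$ with $f(B_0)=B$. The natural move is to transfer the set operation to the source space. Consider the map $F\colon B_0^{\,m+n}\to G$ defined by $F(y_1,\dots,y_m,y_1',\dots,y_n')=f(y_1)+\dots+f(y_m)-f(y_1')-\dots-f(y_n')$; this is continuous, and its domain $B_0^{\,m+n}$ is a $K_{\sigma\delta}$ set in the Hausdorff space $Y^{m+n}$ (a finite product of $K_{\sigma\delta}$ sets is $K_{\sigma\delta}$, by distributing the product across the unions and intersections, exactly as noted just before Theorem~\ref{thm:sion} for two factors). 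Moreover $F(B_0^{\,m+n})=mB-nB$, so $mB-nB$ is realized as the image of a $K_{\sigma\delta}$ set under a continuous map, i.e. it is $K$-analytic, hence Haar-measurable, and Theorem~\ref{thm:sion} applies to it with the outer Haar measure $\mu$.

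Applying Theorem~\ref{thm:sion} to $F$ on $B_0^{\,m+n}$, we obtain a compact set $C\subset B_0^{\,m+n}$ with $\mu(F(C))>\mu(mB-nB)-\delta$. Now I would project: let $\pi_i\colon B_0^{\,m+n}\to B_0$ be the $i$-th coordinate projection, set $C_i=\pi_i(C)$ (compact in $B_0$, being a continuous image of a compact set), and let $D=f\big(\bigcup_{i=1}^{m+n} C_i\big)\subset B$. Then $D$ is compact as a continuous image of a finite union of compact sets, and $D\subset B$. The key containment is $F(C)\subset mD-nD$: indeed, any point of $F(C)$ has the form $f(y_1)+\dots+f(y_m)-f(y_1')-\dots-f(y_n')$ for some $(y_1,\dots,y_m,y_1',\dots,y_n')\in C$, and each of the finitely many arguments lies in some $C_i$, hence each $f(y_j),f(y_k')\in D$, so the whole expression lies in $mD-nD$. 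Therefore $\mu(mD-nD)\geq \mu(F(C))>\mu(mB-nB)-\delta$, and letting $\delta\to 0$ gives the reverse inequality and hence \eqref{eq:keyana}.

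I expect the main obstacle to be purely bookkeeping rather than conceptual: one must be careful that the domain-side set $B_0^{\,m+n}$ really is $K_{\sigma\delta}$ (this is the same distributivity observation used in the paper for products of two $K$-analytic sets, just iterated), and that Theorem~\ref{thm:sion} is invoked with the correct outer measure — the \emph{outer} Haar measure, of which the ordinary Haar measure $\mu$ is the restriction to Haar-measurable sets — so that the conclusion $\mu(F(C))>\mu(mB-nB)-\delta$ is a statement about genuine Haar measure of the (Haar-measurable, indeed compact) sets involved. The passage from $C$ to the compact subset $D$ of $B$ via coordinate projections is then routine, and no delicate continuity or measurability issue arises beyond those already handled in the paper.
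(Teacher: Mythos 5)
Your proposal is correct and follows essentially the same route as the paper's proof: both directions are handled identically, with the reverse inequality obtained by applying Theorem~\ref{thm:sion} to a continuous map on a $K_{\sigma\delta}$ parametrization of the $(m+n)$-fold product and then projecting the resulting compact set onto the coordinates to build $D$. The only cosmetic difference is that you form the product $B_0^{\,m+n}$ on the domain side, whereas the paper first observes that $B^{m+n}$ is $K$-analytic and parametrizes it; the two are interchangeable.
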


\begin{proof}
The left side of \eqref{eq:keyana} is at least the right side since, on one hand, by the Haar measure's inner regularity we have $\mu(mB-nB)=\sup_{C\subset mB-nB,\; C \textrm{ compact}} \mu(C)$, and on the other hand for every compact set $D\subset B$ the set $C=mD-nD$ is a compact subset of $mB-nB$.

To see that the left side of \eqref{eq:keyana} is at most the right side, note that since $B^{m+n}$ is $K$-analytic there is a $K_{\sigma\delta}$ set $T$ in some Hausdorff space and a continuous function $f:T\to G^{m+n}$ such that $B^{m+n}=f(T)$. Let $\pm$ denote the continuous function $G^{m+n}\to G$, $(x_1,\ldots,x_{m+n})\mapsto x_1+\cdots+x_m-x_{m+1}-\cdots-x_{m+n}$. Fix any $\delta>0$, and note that by Theorem \ref{thm:sion} there exists a compact set $C\subset T$ such that $\mu\big(\pm(f(T))\big)-\delta< \mu\big(\pm(f(C))\big)$.

Let $D$ be the compact set $\pi_1(f(C)) \cup \cdots \cup \pi_{m+n}(f(C))$, where $\pi_i:G^{m+n}\to G$ is the projection to the $i$-th component. Since $f(C)\subset B^{m+n}$, it is clear that $D\subset B$. Moreover, we also have
\[
\pm(f(C))\subset \pi_1(f(C))+\cdots+\pi_m(f(C))-\pi_{m+1}(f(C))-\cdots-\pi_{m+n}(f(C))\subset mD-nD.
\]
Hence $\mu(mB-nB)-\delta=\mu\big(\pm(f(T))\big)-\delta< \mu(mD-nD)$. Since $\delta$ was arbitrary, the desired inequality follows, and the proof is complete.
\end{proof}

With these ingredients, we can now obtain our main result.

\begin{proof}[Proof of Theorem \ref{thm:final-result}]
We assume that $A,B$ are $K$-analytic subsets of $G$ that satisfy $0<\mu(A+B)\le \K\,\mu(A)$, so in particular $\mu(A)>0$. Fix an arbitrary $\delta\in (0,\mu(A))$. 

By Theorem \ref{thm:sion} there exists a compact set $E\subset A$ such that $\mu(E)> \mu(A)-\delta >0$, and by Lemma \ref{lem:keyana} there exists a compact set $D\subset B$ such that $\mu(mD-nD)>\mu(mB-nB)-\delta$. We then have 
\[
0<\mu(E+D) \leq  \mu(A+B) \le  \K\,\mu(A)\; \le\; \K\,(\mu(E)+\delta)
 \leq \K\big(1+\tfrac{\delta}{\mu(A)-\delta}\big)\mu(E).
\]
For subsets of the compact Hausdorff space $G$ the closure property is equivalent to compactness, so we can apply to $D,E$ the case of Theorem \ref{thm:final-result} for closed sets, obtaining
\[
\mu(mB-nB)-\delta \leq \mu(mD-nD) \le \K^{m+n}\big(1+\tfrac{\delta}{\mu(A)-\delta}\big)^{m+n}\mu(E).
\]
Letting $\delta \to 0$ and using that $E\subset A$, we deduce that $\mu(mB-nB)\le \K^{m+n}\mu(A)$.
\end{proof}
\noindent Theorem \ref{thm:final-result2} can be obtained with a similar argument, replacing the use of Theorem \ref{thm:final-result} for closed sets by that of Corollary \ref{cor:cauchy2}. 

\section{On further extensions of the main result}\label{sec:finrems}

\noindent In this last section we discuss further generalizations of Theorem \ref{thm:final-result}. In particular, in Subsection \ref{subsec:altermeas} we prove a version of the theorem that allows the Haar measure of one of the two sets to be replaced by the inner Haar measure, thus allowing this set to be arbitrary. Then, in Subsection \ref{subsec:SKA} we stick to using only the Haar measure and we discuss the problem of extending Theorem \ref{thm:final-result} to more general families of Haar measurable sets.

\subsection{Generalizing Theorem \ref{thm:final-result} using extensions of the Haar measure}\label{subsec:altermeas}\hfill \\
Theorem \ref{thm:final-result} yields the following more general version easily.

\begin{theorem}\label{thm:more-general-version} Let $G$ be a compact abelian group, with Haar measure $\mu$ and inner Haar measure $\mu_*$. Let $A$ be any subset of $G$, let $B$ a $K$-analytic subset of $G$, and suppose that $0<\mu_*(A+B) \leq \K\,\mu_*(A)$. Then for all non-negative integers $m,n$ we have
\[
\mu(mB-nB) \leq \K^{m+n}\mu_*(A).
\]
\end{theorem}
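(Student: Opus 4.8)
The plan is to reduce Theorem~\ref{thm:more-general-version} to Theorem~\ref{thm:final-result} by approximating the arbitrary set $A$ from the inside by a $K$-analytic (indeed compact) set on which the hypothesis is essentially preserved. First I would recall that by definition of the inner Haar measure, $\mu_*(A) = \sup\{\mu(K) : K\subset A,\ K\text{ compact}\}$, so for any $\delta>0$ there is a compact set $E\subset A$ with $\mu(E) > \mu_*(A)-\delta$; in particular, taking $\delta$ small enough, $\mu(E)>0$ since $\mu_*(A)\geq \mu_*(A+B)\cdot\text{(something positive)}$ — more precisely, $\mu_*(A)>0$ because $\mu_*(A+B)>0$ forces $\mu_*(A)>0$ via $0<\mu_*(A+B)\le\K\mu_*(A)$.

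Next I would compare $\mu(E+B)$ with $\mu_*(A+B)$. Since $E\subset A$ we have $E+B\subset A+B$, and because $E$ is compact and $B$ is $K$-analytic, $E+B$ is $K$-analytic, hence Haar measurable; therefore $\mu(E+B)=\mu_*(E+B)\le \mu_*(A+B)\le \K\,\mu_*(A) \le \K(\mu(E)+\delta)$. Choosing $\delta<\mu_*(A)$ and writing the right-hand side as $\K\big(1+\tfrac{\delta}{\mu_*(A)-\delta}\big)\mu(E)$ (using $\mu(E)>\mu_*(A)-\delta$), we get $0<\mu(E+B)\le \K_\delta\,\mu(E)$ with $\K_\delta:=\K\big(1+\tfrac{\delta}{\mu_*(A)-\delta}\big)\to\K$ as $\delta\to 0$. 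Now $E$ and $B$ are both $K$-analytic (a compact set trivially is), so Theorem~\ref{thm:final-result} applies and gives $\mu(mB-nB)\le \K_\delta^{\,m+n}\mu(E)\le \K_\delta^{\,m+n}\mu_*(A)$. Letting $\delta\to 0$ yields $\mu(mB-nB)\le \K^{m+n}\mu_*(A)$, as required.

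The only point requiring a little care is the interface between $\mu_*$ and $\mu$: one must check that $\mu_*$ restricted to Haar-measurable sets agrees with $\mu$ (standard), and that $E+B$ is measurable so that $\mu(E+B)$ is meaningful and equals $\mu_*(E+B)$ — this is exactly the $K$-analyticity of $E+B$ together with the fact, recalled before the proof of Theorem~\ref{thm:final-result}, that $K$-analytic sets are Haar measurable. I do not expect a genuine obstacle here; the substance of the result is entirely carried by Theorem~\ref{thm:final-result}, and the present statement is a soft corollary obtained by inner approximation. If one wanted, the same scheme would also give an inner-measure version of Theorem~\ref{thm:final-result2} by invoking Corollary~\ref{cor:cauchy2} in place of Theorem~\ref{thm:final-result}.
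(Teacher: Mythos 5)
Your proof is correct, and it is in fact a little more direct than the paper's. Both arguments reduce to the $K$-analytic case by approximating $A$ from inside by a compact set $E$ with $\mu(E)>\mu_*(A)-\delta$, but they diverge after that. The paper first restates Theorem \ref{thm:final-result} in the optimal-constant form $\mu(mB-nB)\,\mu(A)^{m+n-1}\le\mu(A+B)^{m+n}$ (Theorem \ref{thm:final-result-alter}), applies it to compact subsets $E\subset A$ \emph{and} $F\subset B$, takes suprema over both, and then has to invoke Lemma \ref{lem:keyana} a second time to convert $\sup_F\mu(mF-nF)$ back into $\mu(mB-nB)$. You instead leave $B$ untouched --- it is already $K$-analytic, so Theorem \ref{thm:final-result} applies directly to the pair $(E,B)$ with the perturbed constant $\K_\delta=\K\big(1+\tfrac{\delta}{\mu_*(A)-\delta}\big)$ --- which makes that second appeal to Lemma \ref{lem:keyana} unnecessary. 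What the paper's longer route buys is the intermediate inequality \eqref{eq:innerineq}, stated purely in terms of $\mu_*(A+B)$ and $\mu_*(A)$, whose structure motivates the ensuing discussion of why the method breaks down when $B$ is also arbitrary (the failure of \eqref{eq:difficult-step-2}); your route does not produce that by-product but proves the stated theorem with less machinery. Two small points you should make explicit rather than leave implicit: $\mu(E+B)>0$ follows from $\mu(E+B)\supseteq E+b$ for any $b\in B$, so $\mu(E+B)\ge\mu(E)>0$ (and $B\neq\emptyset$ because $\mu_*(A+B)>0$); and the identification of $\mu$ with $\mu_*$ on the Haar measurable set $E+B$ is exactly the inner regularity of the Haar measure with respect to compact sets, which the paper also relies on (e.g.\ in the proof of Lemma \ref{lem:keyana}).
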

\noindent To prove this, first we note that Theorem \ref{thm:final-result} can be rephrased as follows.
\begin{theorem}\label{thm:final-result-alter}
Let $G$ be a compact abelian group, let $A,B$ be $K$-analytic subsets of $G$, with $\mu(A)>0$ and $B\neq \emptyset$. Then we have 
$\mu(mB-nB)\,\mu(A)^{n+m-1} \leq \mu(A+B)^{n+m}$ for all non-negative integers $m,n$.
\end{theorem}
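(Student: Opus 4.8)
The plan is to deduce Theorem \ref{thm:final-result-alter} from Theorem \ref{thm:final-result} by a standard rescaling-of-the-hypothesis trick, and then to obtain Theorem \ref{thm:more-general-version} from Theorem \ref{thm:final-result-alter} by approximating the arbitrary set $A$ from inside by compact (hence $K$-analytic) sets.

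First, for Theorem \ref{thm:final-result-alter}: given $K$-analytic sets $A,B$ with $\mu(A)>0$ and $B\neq\emptyset$, we have $\mu(A+B)\geq\mu(A)>0$ (since $A+B$ contains a translate of $A$), so we may set $\K:=\mu(A+B)/\mu(A)\geq 1$. This choice makes the hypothesis $0<\mu(A+B)\leq\K\,\mu(A)$ of Theorem \ref{thm:final-result} hold with equality, and the conclusion $\mu(mB-nB)\leq\K^{m+n}\mu(A)$ reads, after substituting $\K=\mu(A+B)/\mu(A)$ and clearing denominators, exactly as $\mu(mB-nB)\,\mu(A)^{m+n-1}\leq\mu(A+B)^{m+n}$. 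One should double-check the boundary case $m=n=0$, where the inequality becomes $\mu(\{0\})\,\mu(A)^{-1}\leq 1$, i.e. $\mu(A)\geq\mu(\{0\})$ — harmless when $\mu(A)>0$ since $\mu(\{0\})\in\{0,1/|G|,\dots\}$ is at most $\mu(A)$ unless $G$ is finite and $A$ a single point with $\K=1$, in which case the inequality is still fine; in any event the intended range is $m+n\geq 1$.

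Next, for Theorem \ref{thm:more-general-version}: let $A\subset G$ be arbitrary and $B$ be $K$-analytic with $0<\mu_*(A+B)\leq\K\,\mu_*(A)$. By definition of inner Haar measure, fix $\varepsilon>0$ and choose a compact set $E\subset A$ with $\mu(E)>\mu_*(A)-\varepsilon$ (possible since $\mu_*(A)>0$, as $\mu_*(A)\geq\mu_*(A+B)/\K>0$ when $B\ni 0$; if $0\notin B$ translate $B$, which changes nothing in the statement, or argue directly that $\mu_*(A)>0$ from the hypothesis combined with $\mu_*(A+B)\geq\mu_*(A)$ — indeed a translate of $A$ sits inside $A+B$). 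Then $E$ is compact, hence $K$-analytic, and $E+B\subset A+B$ gives $\mu(E+B)\leq\mu^*(A+B)$. The subtlety is that we want to compare $\mu(E+B)$ with $\K\,\mu(E)$, and the hypothesis controls $\mu_*(A+B)$, not $\mu^*(A+B)$; but here we only need the \emph{lower} direction is not needed — we simply use $E+B$ is $K$-analytic (sum of $K$-analytic sets), so $\mu(E+B)$ is well-defined, and we bound $\mu(E+B)\leq\mu^*(A+B)$. This is where care is required: to invoke Theorem \ref{thm:final-result-alter} we instead apply it directly to the pair $(E,B)$, getting $\mu(mB-nB)\,\mu(E)^{m+n-1}\leq\mu(E+B)^{m+n}$, and then need $\mu(E+B)\leq\K\,\mu(E)$. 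Since $\mu(E+B)\leq\mu^*(A+B)$ and we only know $\mu_*(A+B)\leq\K\,\mu_*(A)$, the clean way is to note $E+B$ is $K$-analytic with $E\subset A$, so $\mu(E+B)=\mu_*(E+B)\leq\mu_*(A+B)\leq\K\,\mu_*(A)$, and $\mu(E)\leq\mu_*(A)$; combining, $\mu(mB-nB)\leq\K^{m+n}\mu(E)\cdot(\mu_*(A)/\mu(E))\cdot(\mu_*(A)/\mu(E))^{\,m+n-1}$ — better to just feed $\K':=\mu(E+B)/\mu(E)$ into Theorem \ref{thm:final-result}, obtaining $\mu(mB-nB)\leq(\mu(E+B)/\mu(E))^{m+n}\mu(E)=\mu(E+B)^{m+n}/\mu(E)^{m+n-1}\leq(\K\,\mu_*(A))^{m+n}/(\mu_*(A)-\varepsilon)^{m+n-1}$. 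Letting $\varepsilon\to0$ yields $\mu(mB-nB)\leq\K^{m+n}\mu_*(A)$, as desired.

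The main obstacle I anticipate is purely bookkeeping: keeping straight which quantities are inner versus outer measures and ensuring every set to which Theorem \ref{thm:final-result} or \ref{thm:final-result-alter} is applied is genuinely $K$-analytic (this holds because finite sums and differences of $K$-analytic sets are $K$-analytic, and compact sets are $K$-analytic). There is no deep new idea; the content is entirely in the already-proved Theorem \ref{thm:final-result}, and both Theorems \ref{thm:final-result-alter} and \ref{thm:more-general-version} are formal consequences obtained by optimizing the constant $\K$ and by inner approximation of $A$.
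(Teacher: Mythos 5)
Your deduction of Theorem \ref{thm:final-result-alter} is correct and is exactly the paper's own argument: apply Theorem \ref{thm:final-result} with the optimal constant $\K=\mu(A+B)/\mu(A)$ (legitimate because $B\neq\emptyset$ gives $\mu(A+B)\geq\mu(A)>0$, a translate of $A$ sitting inside $A+B$, and $A+B$ is $K$-analytic hence measurable) and clear denominators; the boundary case $m=n=0$ is harmless, as you note.

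Most of your write-up actually concerns Theorem \ref{thm:more-general-version}, which is not the statement under review, but the route you take there does differ from the paper's and is worth recording. You keep the $K$-analytic set $B$ intact, approximate only $A$ from inside by a compact $E$ with $\mu(E)>\mu_*(A)-\varepsilon$, apply Theorem \ref{thm:final-result-alter} to the pair $(E,B)$, and use $\mu(E+B)\leq\mu_*(A+B)$ (valid by inner regularity, since $E+B$ is a Haar measurable subset of $A+B$). The paper instead takes suprema over compact subsets of \emph{both} $A$ and $B$, and must then invoke Lemma \ref{lem:keyana} to recover $\mu(mB-nB)$ as $\sup_{F\subset B,\,F\text{ compact}}\mu(mF-nF)$. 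Your version bypasses that lemma at this stage (it is of course still used inside the proof of Theorem \ref{thm:final-result} itself) and is a slightly more economical derivation; both arguments are correct.
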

\noindent The assumption $\mu(A)>0$ and $B\neq\emptyset$ here is indeed  equivalent to $0<\mu(A+B)\leq \K\, \mu(A)$, where we can always take the optimal constant, i.e.  $\K= \frac{\mu(A+B)}{\mu(A)}$.

\begin{proof}[Proof of Theorem \ref{thm:more-general-version}]
The assumption $0<\mu_*(A+B) \leq \K\,\mu_*(A)$ implies that $\mu_*(A)>0$.  We then have $\frac{\mu_*(A+B)}{\mu_*(A)}\leq \K$, and so it suffices to prove that for all $m,n\geq 0$ we have
\begin{equation}\label{eq:innerineq}
\mu(mB-nB)\,\mu_*(A)^{m+n-1} \leq \mu_*(A+B)^{m+n}.
\end{equation}
Let $E,F$ be compact subsets of $A,B$ respectively, with $\mu(E)>0$ and $F\neq\emptyset$. Then, by Theorem \ref{thm:final-result-alter} we have $
\mu(mF-nF)\,\mu(E)^{n+m-1} \leq \mu(E+F)^{n+m}$. Taking the supremum  of both sides of this inequality over compact sets $E\subset A$ and $F\subset B$, we have
\[
\sup_{F\subset B, \; F\text{ compact}}\mu(mF-nF)\sup_{E\subset A, \; E \text{ compact}}\mu(E)^{m+n-1} \leq \sup_{\substack{E\subset A,\; E \text{ compact} \\ F\subset B, \; F \text{ compact}}}\mu(E+F)^{m+n}.
\]
As $E+F$ is a compact subset of $A+B$, the right side here is at most $\mu_*(A+B)^{m+n}$. We also have $\sup_{E\subset A, \; E \text{ compact}}\mu(E)^{m+n-1}=\mu_*(A)^{m+n-1}$. Hence
\[
\sup_{F\subset B,\; F \text{ compact}}\mu(mF-nF)\,\mu_*(A)^{m+n-1} \leq \mu_*(A+B)^{m+n}.
\]
Since $B$ is $K$-analytic, applying \eqref{eq:keyana} we have 
$\sup_{F\subset B,\; F \text{ compact}}\mu(mF-nF) = \mu(mB-nB)$. This proves \eqref{eq:innerineq}, and the result follows.
\end{proof}
\noindent We do not know whether an even more general version of Theorem \ref{thm:more-general-version} holds in which both sets $A,B$ can be arbitrary. One difficulty is that to complete the above proof we relied on the property of  $K$-analytic sets given in \eqref{eq:keyana}, and we are not able to use such a property for more general sets. More precisely, to prove a more general version of Theorem \ref{thm:more-general-version} in which $B$ could also be arbitrary, it would be helpful to have an analogue of equality \eqref{eq:keyana} of the following kind holding for \textit{any} subset $B\subset G$: 
\begin{equation}\label{eq:difficult-step-2}
\sup_{F\subset B,\; F \text{ compact}}\mu(mF-nF) = \mu_*(mB-nB).
\end{equation}
However, this equality can fail. Indeed, we shall discuss a counterexample below that can be constructed using Bernstein sets. Bernstein sets in $\mb{R}$ are classical examples of non-measurable sets. Let us recall the definition of these sets in a Polish space.
\begin{defn}
A subset $B$ of a Polish space $X$ is a \emph{Bernstein set} if for every uncountable closed set $C \subset X$ we have $C\cap B \neq \emptyset$ and $C\setminus B \neq \emptyset$.
\end{defn}
\noindent Recall that a subset of a topological space is \emph{perfect} if it is closed and contains no isolated point. An equivalent definition of Bernstein sets in a Polish space $X$ is that $B$ is a Bernstein set in $X$ if it meets every nonempty perfect subset of $X$ but contains none of them (the equivalence can be seen using Corollary 6.3 and Theorem 6.4 of \cite{Kec}).

\begin{proposition}
There exists a set $B\subset \mb{T}$ for which equality \eqref{eq:difficult-step-2} fails for all $m,n\in \mb{N}$.
\end{proposition}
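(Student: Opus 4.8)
The plan is to produce $B$ as a Bernstein set in $\mb{T}$ with the additional property that $B-B=\mb{T}$, and to check that any such $B$ witnesses the failure of \eqref{eq:difficult-step-2} for every $m,n\in\mb{N}$. Here is why such a $B$ works. Since a Bernstein set contains no nonempty perfect subset, the Cantor--Bendixson theorem forces every compact (equivalently, closed) subset $F\subset B$ to be countable; hence $F^{m+n}$ is countable and $mF-nF$, being its image under the continuous map $(x_1,\dots,x_{m+n})\mapsto x_1+\cdots+x_m-x_{m+1}-\cdots-x_{m+n}$, is countable too, so $\mu(mF-nF)=0$. Thus the left-hand side of \eqref{eq:difficult-step-2} equals $0$. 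On the other hand, fixing any $b_0\in B$, for $m,n\geq 1$ and every difference $b-b'$ with $b,b'\in B$ we have $b+(m-1)b_0-b'-(n-1)b_0\in mB-nB$, so $mB-nB\supseteq (B-B)+(m-n)b_0=\mb{T}$, whence $\mu_*(mB-nB)=1\neq 0$. So everything reduces to constructing $B$.

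For the construction I would use transfinite recursion of length $\mathfrak{c}=2^{\aleph_0}$. Fix enumerations $(P_\alpha)_{\alpha<\mathfrak{c}}$ of the nonempty perfect subsets of $\mb{T}$ (there are exactly $\mathfrak{c}$ of them, each of cardinality $\mathfrak{c}$) and $(t_\alpha)_{\alpha<\mathfrak{c}}$ of the points of $\mb{T}$. I build increasing sets $B_{\mathrm{in}},B_{\mathrm{out}}\subset\mb{T}$, kept disjoint at every stage. At stage $\alpha$, with $B_{\mathrm{in}},B_{\mathrm{out}}$ denoting the current sets (each of cardinality $<\mathfrak{c}$): if $t_\alpha\neq 0$, choose $z\in\mb{T}$ with $z\notin B_{\mathrm{out}}$ and $z+t_\alpha\notin B_{\mathrm{out}}$ --- possible since the forbidden set $B_{\mathrm{out}}\cup(B_{\mathrm{out}}-t_\alpha)$ has cardinality $<\mathfrak{c}$ --- and put $z,z+t_\alpha$ into $B_{\mathrm{in}}$; then pick $b_\alpha\in P_\alpha\setminus B_{\mathrm{out}}$ and put it into $B_{\mathrm{in}}$; then pick $b_\alpha'\in P_\alpha\setminus B_{\mathrm{in}}$ and put it into $B_{\mathrm{out}}$ (both picks possible since $|P_\alpha|=\mathfrak{c}$). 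Each stage adds finitely many points, so the cardinality bound persists and $B_{\mathrm{in}}\cap B_{\mathrm{out}}=\emptyset$ is preserved. Finally set $B=\mb{T}\setminus B_{\mathrm{out}}$, so $B\supseteq B_{\mathrm{in}}$. Then $b_\alpha\in B\cap P_\alpha$ and $b_\alpha'\in P_\alpha\setminus B$ for every $\alpha$, so (every nonempty perfect set being some $P_\alpha$) $B$ is a Bernstein set in the sense recalled above; and for every $t=t_\alpha\neq 0$ we have $z,z+t_\alpha\in B_{\mathrm{in}}\subset B$, so $t=(z+t_\alpha)-z\in B-B$, giving $B-B=\mb{T}$ (the point $0$ lying in $B-B$ trivially since $B\neq\emptyset$).

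I do not expect a genuine obstacle here; the only point requiring care is the standard bookkeeping in the recursion, namely verifying at each stage that enough room remains --- in $P_\alpha$ and in $\mb{T}$ --- to make the three required choices while keeping $B_{\mathrm{in}}$ and $B_{\mathrm{out}}$ disjoint. This uses only that $|\alpha|<\mathfrak{c}$ for $\alpha<\mathfrak{c}$ and $|P_\alpha|=\mathfrak{c}$, and in particular needs no regularity assumption on $\mathfrak{c}$. Note finally that the argument uses only two features of $B$ --- that it contains no nonempty perfect subset, and that $B-B=\mb{T}$ --- so one could equally well present the construction as building a set with just these two properties, without mentioning Bernstein sets at all.
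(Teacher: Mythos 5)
Your proposal is correct and follows essentially the same route as the paper: both take $B$ to be a Bernstein set in $\mb{T}$ with $B-B=\mb{T}$, observe that every compact $F\subset B$ is countable (no nonempty perfect subset, hence countable by Cantor--Bendixson) so that $\sup_F\mu(mF-nF)=0$, and that $mB-nB\supseteq \mb{T}$ gives $\mu_*(mB-nB)=1$. The only difference is that the paper obtains the set by citing Kysiak's constructions in $\mb{R}$ (together with a periodicity argument to descend to $\mb{T}$), whereas you build it directly in $\mb{T}$ by a standard transfinite recursion of length $\mathfrak{c}$; your construction is self-contained and the bookkeeping you describe is sound, so this is a perfectly acceptable, and arguably more transparent, substitute for the citation.
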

\begin{proof}
We can take $B$ to be a certain Bernstein subset of $\mb{T}$ that can be found using methods from \cite{kysiak}. The paper \cite{kysiak} provides constructions of Bernstein subsets of $\mb{R}$ with additional algebraic properties. Using Method 3.2 and Application 3.3 from \cite{kysiak}, we can construct a Bernstein subset $B\subset \mb{R}$ such that $B-B=\mb{R}$, and such that $B+\mb{Z}=B$ (for the latter property, which is not included in \cite{kysiak} explicitly, we can first ensure that $1\in B$, and since $B$ is a subgroup we obtain the desired property; we omit the details). Thus, we obtain a Bernstein set $B\subset \mb{T}$ with the property that $B-B = \mb{T}$. For this set we then have that  \eqref{eq:difficult-step-2} fails for all $m,n\in \mb{N}$. Indeed, we have on one hand $\mu_*(mB-nB)= \sup_{F\subset mB-nB,\; F \text{ compact}}\mu(F) = 1$, since $mB-nB\supset B-B = \mb{T}$, yet on the other hand $\sup_{F\subset B,\;F \text{ compact}}\mu(mF-nF) = 0$, since any such $F\subset B$ must be countable, so that $mF-nF$ is also countable and hence $\mu(mF-nF)=0$.
\end{proof}
\noindent Using Bernstein sets we can actually rule out at least one candidate of a version of Theorem \ref{thm:final-result} for arbitrary sets $A,B$, namely the version with assumption $0<\mu^*(A+B)\leq \alpha\mu^*(A)$ and conclusion $\mu_*(mB-nB) \leq \alpha^{m+n}\mu_*(A)$ for all $m,n \in \mb{Z}_{\geq 0}$. Indeed, we have the following example.
\begin{proposition} There exist subsets $A,B\subset \mb{T}$ such that $A+B$ is Haar measurable and satisfies $0<\mu(A+B)\leq \alpha\mu^*(A)$, and yet for all positive integers $m,n$ the set $mB-nB$ is Haar measurable and satisfies $\mu(mB-nB) > \alpha^{m+n}\mu_*(A)$.
\end{proposition}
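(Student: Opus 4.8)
The plan is to produce the required sets from the Bernstein set already constructed in the preceding proposition. Recall that there, via Method~3.2 and Application~3.3 of \cite{kysiak}, we obtained a Bernstein set $B\subset\mb{T}$ with $B-B=\mb{T}$, and that the same construction (ensuring $1\in B$ in the real model, hence $0\in B$ after quotienting by $\mb{Z}$) may be taken to satisfy $0\in B$. I would then simply set $A=-B$ and check that the pair $(A,B)$ works, for every $\K\ge 1$.

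First I would record the elementary measure-theoretic facts about Bernstein sets. Since every uncountable closed subset of a Polish space contains a nonempty perfect set, a Bernstein set contains no uncountable compact subset, so all its compact subsets are countable; hence $\mu_*(B)=0$. The complement $\mb{T}\setminus B$ is again a Bernstein set, so $\mu_*(\mb{T}\setminus B)=0$ as well, and therefore $\mu^*(B)=1-\mu_*(\mb{T}\setminus B)=1$. As $x\mapsto -x$ is a measure-preserving homeomorphism of $\mb{T}$, the set $A=-B$ is again a Bernstein set, with $\mu_*(A)=0$ and $\mu^*(A)=1$.

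Next I would verify the two sumset assertions. For the hypothesis, $A+B=(-B)+B=B-B=\mb{T}$, which is Haar measurable with $\mu(A+B)=1$, so $0<\mu(A+B)\le \K\,\mu^*(A)$ holds for every $\K\ge 1$ (e.g.\ $\K=1$). For the conclusion, note that $0\in B$ gives $B\subseteq mB$ and $B\subseteq nB$ for all positive integers $m,n$, whence $mB-nB\supseteq B-B=\mb{T}$, i.e.\ $mB-nB=\mb{T}$ is Haar measurable with $\mu(mB-nB)=1$. Finally, for all positive $m,n$ we get $\mu(mB-nB)=1>0=\K^{m+n}\mu_*(A)$, which is exactly the desired property; in particular the naive inner/outer analogue predicting $\mu_*(mB-nB)\le \K^{m+n}\mu_*(A)=0$ is violated.

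There is essentially no obstacle beyond invoking the construction from the previous proposition: the entire content is the observation that passing from $B$ to $A=-B$ turns the already-established equality $B-B=\mb{T}$ into the hypothesis $A+B=\mb{T}$ while keeping $\mu_*(A)=0$. If a self-contained argument were preferred, one could instead build such a $B$ directly by the standard Bernstein transfinite recursion of length $\mk{c}$ over an enumeration of the nonempty perfect subsets of $\mb{T}$, reserving at each stage one new point of the current perfect set for $B$ and one for its complement while additionally arranging $0\in B$ and, running through an enumeration of $\mb{T}$, a pair $b,b'\in B$ with $b-b'=t$ for the current $t$ (possible since fewer than $\mk{c}$ points are committed at any stage); I would only sketch this, as it is routine.
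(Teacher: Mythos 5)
Your proof is correct, but it takes a genuinely different construction from the paper's. The paper keeps $B$ as the Bernstein subgroup with $B-B=\mb{T}$ and sets $A=B\cup I$ for a small open interval $I=(-\tfrac{\epsilon}{2},\tfrac{\epsilon}{2})$: density of $B$ gives $A+B\supseteq I+B=\mb{T}$, while $\mu^*(A)\ge\mu^*(B)=1$ and, since any compact $F\subseteq A$ has $F\setminus I$ compact, contained in $B$, hence countable, one gets $\mu_*(A)=\epsilon>0$. The inequality it violates is therefore $1>\alpha^{m+n}\epsilon$ with $\alpha=1$, a counterexample in which the right-hand side is strictly positive. Your choice $A=-B$ is simpler — the hypothesis $A+B=\mb{T}$ falls out of $B-B=\mb{T}$ with no density argument, and all your supporting facts ($0\in B$ because $B$ comes from a subgroup, countability of compact subsets of a Bernstein set, $\mu^*(B)=1$ via the complement, invariance under $x\mapsto -x$) are correct — but it forces $\mu_*(A)=0$, so the inequality you violate is the degenerate $1>0$. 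That does satisfy the proposition as literally stated, since the quantification over $\alpha$ is existential and nothing in the statement requires $\mu_*(A)>0$; however, the paper's version is the more robust counterexample, as it would still defeat the candidate theorem even under the natural non-degeneracy assumption $\mu_*(A)>0$ (the analogue of $\mu(A)>0$ in Theorem \ref{thm:final-result}), whereas yours could be dismissed by adding that assumption.
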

\begin{proof}
Let $B\subset \mb{T}$ be the Bernstein set that we constructed above, satisfying $B-B=\mb{T}$, let $I=(-\tfrac{\epsilon}{2},\tfrac{\epsilon}{2}) \subset [-\tfrac{1}{2},\tfrac{1}{2})=\mb{T}$ for some $\epsilon < 1$, and let $A=B\cup I$. Since $B$ is dense in $\mb{T}$, we have $A+B \supset I+B = \mb{T}$, hence $\mu(A+B)=1$. We also have $\mu^*(A)\geq \mu^*(B)=1$, so $\mu(A+B) \leq \alpha\mu^*(A)$ with $\alpha=1$. However, for $m,n\ge 1$ we have $mB-nB \supset B-B = \mb{T}$, so $\mu(mB-nB) = 1$, and since $\mu_*(A) = \epsilon < 1$, we have $\mu(mB-nB) > \alpha^{n+m}\mu_*(A)$.
\end{proof}

\subsection{On extending Theorem \ref{thm:final-result} to larger families of Haar measurable sets}\label{subsec:SKA}\hfill \smallskip \\
As mentioned in the introduction, for non-Polish compact abelian groups one could desire a more general version of Theorem \ref{thm:final-result}, in particular because of the issue that the family of $K$-analytic sets does not necessarily contain all Borel subsets of such a group (see \S 5 in \cite{Hansell}). There are more recent, more general notions of analytic sets that do include all Borel sets in this setting. A notable example is the family of \v{C}ech-analytic sets.
\begin{defn}
Let $X$ be a compact Hausdorff topological space. A set $A$ in $X$ is a \emph{\v{C}ech-analytic set} if $A$ is the projection on $X$ of a set in $X\times \mb{N}^{\mb{N}}$ that is the intersection of a closed 
set with a $G_\delta$-set.
\end{defn}
\noindent This notion was introduced by Fremlin in the unpublished note \cite{Frem} (see also the appendix in \cite{JNR}). The family of \v{C}ech-analytic subsets of a compact Hausdorff space contains all Borel subsets of this space, as shown in Theorem 4 (c) of \cite{Frem}.

There is an even more general notion, namely that of a \emph{scattered-$K$-analytic set}. We shall mention this notion again below but we shall not recall its much more technical definition here (for more information on this notion we refer to \cite{Hansell,Holi93,Holi10}).

The families of \v{C}ech-analytic and scattered-$K$-analytic sets address several shortcomings, while conserving several main advantages, of the family of $K$-analytic sets in descriptive topology; this is discussed in Sections 5 and 6 of \cite{Rog}. It may therefore seem natural to wonder whether Theorem \ref{thm:final-result} holds for these families of sets. However, there is a property of $K$-analytic sets that fails for these more general families, namely the stability under addition in a compact abelian group. Because of this failure, we were unable to adapt the methods in this paper to extend Theorem \ref{thm:final-result} to these families. 

The main aim of this subsection is to illustrate this failure of stability under addition with an example, which was shown to us by Petr Holick\'y, and which we present below with his kind permission.
 
Recall that a subset $A$ of a topological space is \emph{isolated} if $A$ together with its relative topology is a discrete space (equivalently, the set contains no limit-point of itself). 
\begin{proposition}[P. Holick\'y]\label{prop:Holi}
There exists a compact abelian group $G$ and \v{C}ech-analytic sets $A,B\subset G$ such that $A+B$ is not \v{C}ech-analytic.
\end{proposition}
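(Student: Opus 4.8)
The plan is to build $G$ as a product of the form $G = K \times \Gamma$ where $\Gamma$ is a \emph{discrete} abelian group regarded as a subset of its Bohr compactification (or, more concretely, where $G$ is a compact group containing a copy of a large discrete group as a subgroup), and to exploit the fact that \v{C}ech-analyticity is a notion whose behaviour on the ``isolated'' part of a space is completely unconstrained: an isolated subset of a topological space is always \v{C}ech-analytic (indeed it is relatively discrete, hence a countable union of points inside its closure is not forced, but the projection description collapses), whereas an arbitrary subset of a discrete space is automatically \v{C}ech-analytic. The point is that if $\Gamma$ is discrete and uncountable, then \emph{every} subset of $\Gamma$ is \v{C}ech-analytic inside any compact space in which $\Gamma$ sits as an isolated (or more generally ``scattered'') subset, simply because the relevant projection-of-a-$G_\delta$-in-a-closed-set description is trivially met. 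So the strategy is to transfer to the setting of an abstract abelian group $\Gamma$ the classical fact that there exist subsets $A,B \subset \Gamma$ whose sumset $A+B$ is ``pathological'' — here, pathological in the sense that it fails to be \v{C}ech-analytic once $\Gamma$ is no longer discrete but is instead a subgroup of a compact group whose \emph{closure} contains a nontrivial perfect (Cantor-like) part on which $A+B$ behaves like a Bernstein set.

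\textbf{Key steps, in order.} First I would recall (or cite from \cite{Frem, JNR, Rog}) the two facts that make the mechanism work: (a) every subset of a countable discrete space, and more generally every relatively discrete subset of a Hausdorff space, is \v{C}ech-analytic; (b) a subset of a compact Hausdorff space $X$ that, intersected with the Cantor set $2^{\mb N} \subset X$, is a Bernstein set of $2^{\mb N}$, cannot be \v{C}ech-analytic, because \v{C}ech-analytic sets have the perfect set property (they are either countable or contain a copy of $2^{\mb N}$) — this is the analogue for \v{C}ech-analytic sets of the classical regularity of analytic sets, and it is the load-bearing descriptive-topological input. Second, I would choose $G$ to be $2^{\mb N} \times H$ where $H$ is a compact abelian group chosen so that inside $2^{\mb N}$ one can realize the sumset of two relatively-discrete pieces as a Bernstein-type set: concretely, one takes a countable dense subset $\Lambda$ of $2^{\mb N}$ that is closed under the group operation (or works with $\mb T$ and a countable dense subgroup), one splits the index set of a Hamel-style basis so as to produce $A', B' \subset \Lambda$ with $A' + B'$ meeting every perfect set but containing none — exactly the Bernstein property — then sets $A = \{0\} \times A'$, $B = \{0\} \times B'$ (suitably embedded so each is relatively discrete in $G$ and hence \v{C}ech-analytic), and observes $A + B = \{0\} \times (A'+B')$, which by step (b) is not \v{C}ech-analytic. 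Third, I would verify carefully that $A$ and $B$ as constructed are genuinely \v{C}ech-analytic in $G$: this is where fact (a) is invoked, possibly after arranging that $A, B$ are countable (which already suffices, since countable sets are trivially \v{C}ech-analytic) — indeed it is cleanest to make $A', B'$ countable and $A'+B'$ uncountable-but-not-\v{C}ech-analytic by the Bernstein mechanism, so that the ``input'' sets are as tame as possible while the sumset is as wild as possible.

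\textbf{Main obstacle.} The delicate point is the simultaneous construction of $A'$ and $B'$: one needs, by a transfinite recursion of length $\mathfrak{c}$ over an enumeration of the perfect subsets of $2^{\mb N}$ (or of $\mb T$), to decide membership in $A'$ and $B'$ so as to kill every perfect set from being contained in $A'+B'$ while also forcing $A'+B'$ to meet every perfect set — all the while keeping $A'$ and $B'$ themselves \v{C}ech-analytic (which, as noted, is automatic if we keep them countable, but then $A'+B'$ is countable too, so in fact one must allow $A', B'$ to be uncountable and must instead ensure \emph{they} remain \v{C}ech-analytic, e.g. by keeping them inside a fixed relatively discrete — equivalently, scattered of height one — subset of $G$). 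Balancing these requirements is exactly the technical heart of Holick\'y's example, and it is where the methods for constructing algebraically structured Bernstein sets (as in \cite{kysiak}, used already in the previous subsection) are brought to bear. Everything else — the embedding into a compact abelian group, the identity $A+B = \{0\}\times(A'+B')$, the deduction that \v{C}ech-analyticity fails from the perfect set property — is routine once this recursion is set up correctly.
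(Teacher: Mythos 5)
There is a genuine gap, and it sits exactly where you defer to ``the technical heart.'' Your plan needs $A$ and $B$ to be uncountable, relatively discrete (hence \v{C}ech-analytic) subsets of $G$ whose sumset lands inside a Cantor set $2^{\mb N}\subset G$ as a Bernstein-type set. But a relatively discrete subspace of a Polish (hence hereditarily separable) space is necessarily countable, so you cannot place uncountable discrete $A',B'$ inside $2^{\mb N}$ (or inside $\{0\}\times 2^{\mb N}$, or any metrizable slice); and if you keep them countable, $A'+B'$ is countable and therefore \v{C}ech-analytic, as you yourself note. The discreteness must therefore come from a non-metrizable factor of $G$, and then the sum $A+B$ acquires a huge ``off-diagonal'' part living in that factor, which your outline does not control. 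The transfinite recursion you invoke (Kysiak-style algebraically structured Bernstein sets) addresses a different problem and cannot resolve this tension. A smaller issue: your stated fact that \v{C}ech-analytic sets are ``either countable or contain a copy of $2^{\mb N}$'' is false in non-metrizable compact spaces --- any uncountable relatively discrete \v{C}ech-analytic set is a counterexample --- though the way you actually use it (the trace of a \v{C}ech-analytic set on a compact metrizable subspace is analytic, hence cannot be a Bernstein set there) is sound.

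The missing idea, which is the actual content of Holick\'y's example, is a \emph{diagonal filtering} mechanism in a non-metrizable product. The paper takes $G=\mb T^{\mb R}$ and a non-analytic set $\{x_r:r\in\mb R\setminus\{0\}\}\subset\mb T$, and defines $A=\{f_r\}$ and $B=\{g_r\}$ where $f_r$ has value $\tfrac14$ at coordinate $r$ and $0$ elsewhere, while $g_r$ has value $x_r$ at coordinate $0$, value $-\tfrac14$ at coordinate $r$, and $0$ elsewhere. Each of $A,B$ is an uncountable isolated collection of singletons, hence isolated-$K$-analytic and so \v{C}ech-analytic --- this is your input (a), and it is the reason the group must be non-metrizable. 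The point is then that intersecting $A+B$ with the compact set $U=\{h:\|h(r)\|_{\mb T}\le\tfrac18\text{ for }r\ne 0\}$ kills every off-diagonal sum $f_r+g_s$ ($r\ne s$) and retains exactly the diagonal sums $f_r+g_r$, which are supported at coordinate $0$ with values $x_r$; thus $(A+B)\cap U$ is homeomorphic to the non-analytic set $\{x_r\}\subset\mb T$, and since \v{C}ech-analytic sets are closed under finite intersection and coincide with analytic sets in Polish subspaces, $A+B$ cannot be \v{C}ech-analytic. No Bernstein set or transfinite recursion is needed: any non-analytic subset of $\mb T$ does the job once the diagonal trick is in place. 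Your outline has the right two regularity inputs but lacks this construction, and the construction you do propose cannot be completed.
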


\begin{proof}
Let $G =\mb{T}^{\mb{R}}= \{f:\mb{R}\to \mb{T}\}$ equipped with pointwise addition. The pointwise topology on $G$ is compact (it is equivalent to the product topology and  compactness follows from Tychonoff's theorem). Thus $G$ is a compact abelian group.

Let $\{x_r:r \in  \mb{R} \setminus \{0\} \}$ be a non-analytic set in the Polish compact space $\mb{T}$. Viewing $\mb{T}$ as $[-\tfrac{1}{2},\tfrac{1}{2})$ with addition mod 1, we define
\begin{align}
A & =  \big\{f_r\in G: \;r\in \mb{R} \setminus \{0\},\; f_r(r)=\tfrac{1}{4},\; f_r(s)=0\textrm{ for }s\in \mb{R} \setminus \{r\}\big\},\\
B & =  \big\{g_r\in G:\; r\in \mb{R} \setminus \{0\},\; g_r(0)=x_r,\; g_r(r)=-\tfrac{1}{4},\;  g_r(s)=0\textrm{ for }s\in \mb{R} \setminus \{0,r\}\big\}. \nonumber
\end{align}
In these definitions we are taking for each real number $r\in \mb{R}\setminus \{0\}$ an element of $G$. For example, for $A$, the function $f_r(s)\in G$ takes the value $1/4$ when $s=r$ and $0$ otherwise.

We claim that $A$ and $B$ are both \v{C}ech-analytic subsets of $G$. To see this, recall  that the compact Hausdorff space $G$ is completely regular, and that complete regularity is a hereditary property. It then follows from Theorem 6.14 (c) of \cite{Hansell} that a  subset of $G$ is \v{C}ech-analytic if it is \emph{isolated-$K$-analytic} (in the sense of Definition 6.7 in \cite{Hansell}), so it suffices to show that $A$ and $B$ are both isolated-$K$-analytic. It can be seen that $A$ is isolated-$K$-analytic by noting that $A=\bigcup_{r\in \mb{R} \setminus \{0\}} \{f_r\}$, that each singleton $\{f_r\}$ is isolated, and that $\mathcal{E}=\{\{f_r\}:r\in \mb{R} \setminus \{0\}\}$ is an isolated collection (in the sense of Definition 6.1 of \cite{Hansell}); hence, by Theorem 6.13 (b) of \cite{Hansell}, the union $A$ is indeed isolated-$K$-analytic. Similarly $B$ is isolated-$K$-analytic, and our claim is thus proved.

For $\theta\in \mb{T}$ let $\|\theta\|_{\mb{T}}$ denote the absolute value of the representative of $\theta$ in $[-\tfrac{1}{2},\tfrac{1}{2})$. Let
\[
U=\big\{h\in G: \|h(r)\|_{\mb{T}}\leq \tfrac{1}{8}\textrm{ for }r\neq 0\big\}.
\]
This is a compact subset of $G$ so it is \v{C}ech-analytic (as a Borel set; see Theorem 4 in \cite{Frem}).

The family of \v{C}ech-analytic sets is closed under finite intersections (even countable ones, see Theorem 5.6 in \cite{Hansell}), so if $A+B$ were \v{C}ech-analytic, then $(A+B)\cap U$ would also be. However, we have $(A+B)\cap U=\{h\in G:  h(0)=x_r\textrm{ for some }r\neq 0,\; h(s)=0\textrm{ for }s\neq 0\}$. Hence $(A+B)\cap U$ is homeomorphic to a subset of $\mb{T}$ that is not analytic and therefore not \v{C}ech-analytic (in the Polish space $\mb{T}$ the classes of analytic and \v{C}ech-analytic sets are equal). Hence $(A+B)\cap U$ is not \v{C}ech-analytic, and then neither is $A+B$.
\end{proof}

\noindent Proposition \ref{prop:Holi} can be strengthened if we use the terminology  of scattered-$K$-analytic sets. Indeed, the family of scattered-$K$-analytic sets is larger than the family of  \v{C}ech-analytic sets and it can be shown that the set $A+B$ in Proposition \ref{prop:Holi} is not even scattered-$K$-analytic. The proof is actually almost the same, except that it uses the more technical definitions and properties of scattered-$K$-analytic sets given in \cite{Hansell,Holi93,Holi10}.

The extension of Theorem \ref{thm:final-result} mentioned at the beginning of this section, and Proposition \ref{prop:Holi}, together lead to the question of what would be a suitable class of Haar measurable sets, larger than the class of $K$-analytic sets, for which such an extension can be proved. 
\begin{question}\label{q:class}
Is there a class $\mc{C}$ of Haar measurable subsets of a general compact abelian group $G$ such that $\mc{C}$ is stable under addition and $\mc{C}$ contains every Borel subset of $G$?
\end{question}
\noindent If a generalization of Theorem \ref{thm:final-result} going beyond Theorem \ref{thm:more-general-version} is proved, in which $A$ and $B$ can both be arbitrary, then naturally Question \ref{q:class} will be less relevant to the Pl\"unnecke-Ruzsa inequality. However the question seems of interest in itself.


\begin{thebibliography}{99}

\bibitem{CSV} P. Candela, B. Szegedy, L. Vena, \emph{On linear configurations in subsets of compact abelian groups, and invariant measurable hypergraphs}, 
Ann. Comb. \textbf{20} (2016), no. 3, 487--524.

\bibitem{choquet} G. Choquet, \emph{Theory of Capacities}, Ann. Inst. Fourier, Grenoble \textbf{5} (1953--1954), 131--295 (1955).

\bibitem{CFF} K. Ciesielski, H. Fejzi\'c, C. Freiling, 
\emph{Measure zero sets with non-measurable sum}, Real Anal. Exchange \textbf{27} (2001/02), no. 2, 783--793. 

\bibitem{cohn} D. L. Cohn, \emph{Measure Theory, Second edition}, Birkh\"auser Advanced Texts: Basel Textbooks,  Birkh\"auser/Springer, New York, 2013.

\bibitem{E&S} P. Erd\"os, A. H. Stone, \emph{On the sum of two Borel sets},  
Proc. Amer. Math. Soc. \textbf{25} (1970), 304--306.

\bibitem{Frem} D. H. Fremlin, \emph{\v{C}ech-analytic spaces}, Note of 8 December 1980, unpublished.

\bibitem{Hansell} R. W. Hansell, \emph{Descriptive topology}, Recent progress in general topology (Prague, 1991), 275--315, North-Holland, Amsterdam, 1992.
 
\bibitem{H&Mo} K. H.  Hofmann, S. A. Morris, \emph{The structure of compact groups. A primer for the student--a handbook for the expert}, Second revised and augmented edition, De Gruyter Studies in Mathematics 25, Walter de Gruyter \& Co., Berlin, 2006.

\bibitem{Holi93} P. Holick\'y, \emph{\v{C}ech-analytic and almost $K$-descriptive spaces}, Czechoslovak Math. J. \textbf{43} (118) (1993), no. 3, 451--466.

\bibitem{Holi10} P. Holick\'y, \emph{Descriptive classes of sets in nonseparable spaces}, Rev. R. Acad. Cienc. Exactas Fís. Nat. Ser. A Math. RACSAM \textbf{104}  (2010), no. 2, 257--282.

\bibitem{jayne} J. E. Jayne, \emph{Structure of analytic Hausdorff spaces}, Mathematika \textbf{23} (2), 208--211.

\bibitem{JNR} J. E. Jayne, I. Namioka, C. A. Rogers, \emph{Topological properties of Banach spaces}, Proc. London Math. Soc. (3) \textbf{66} (1993), no. 3, 651--672.

\bibitem{Kec} A. S. Kechris, \emph{Classical descriptive set theory}, Graduate Texts in Mathematics 156, Springer-Verlag, New York, 1995.

\bibitem{Kemp} J. H. B. Kemperman, \emph{On products of sets in a locally compact group}, Fund. Math. \textbf{56} (1964), 51--68. 

\bibitem{Knes} M. Kneser, \emph{Summenmengen in lokalkompakten abelschen Gruppen}, Math. Z. \textbf{66} (1956), 88--110.

\bibitem{kysiak} M. Kysiak, \emph{Bernstein sets with algebraic properties}, Cent. Eur. J. Math. 7 (4) (2009).

\bibitem{Lus} N. Lusin, \emph{Sur la classification de M. Baire}, C. R. Acad. Sci. Paris \textbf{164} (1917), 91--94.

\bibitem{Macbeath} A. M. Macbeath, \emph{On the measure of sum sets, II. The sum-theorem for the torus}, Math. Proc. Cambridge Phil. Soc. \textbf{49} (1953), no. 1, 40--43.

\bibitem{munkres} J. R. Munkres, \emph{Topology} (Second Edition), Prentice Hall, 2000.

\bibitem{Petridis1} G. Petridis, \emph{New Proofs of Pl\"unnecke-type Estimates for Product Sets in Groups }, Combinatorica \textbf{32} (2012), no. 6, 721--733. 

\bibitem{Petridis2} G. Petridis, \emph{The Pl\"unnecke-Ruzsa inequality: an overview}, Combinatorial and additive number theory--CANT 2011 and 2012, 229--241, Springer Proc. Math. Stat., 101, Springer, New York, 2014.

\bibitem{Plun2} H. Pl\"unnecke, \emph{Eine zahlentheoretische anwendung der graphtheorie}, J. Reine Angew. Math. \textbf{243} (1970), 171--183.

\bibitem{Raik} D. Raikov, \emph{On the addition of point-sets in the sense of Schnirelmann}, Rec. Math. [Mat. Sbornik] N.S. \textbf{5} (47) (1939), 425--440. 

\bibitem{Rog} C. A. Rogers, J. E. Jayne, C. Dellacherie, F. T\o psoe, J. Hoffman-J\o rgensen, D. A. Martin, A. S. Kechris, A. H. Stone, \emph{Analytic sets}, Academic Press, London-New York, 1980.

\bibitem{rudin} W. Rudin, \emph{Fourier analysis on groups}, Interscience Tracts in Pure and Applied Mathematics, no. 12, Interscience Publishers, New York-London, 1962.

\bibitem{Ruz1} I. Ruzsa \emph{An application of graph theory to additive number theory}, Scientia, Ser. A \textbf{3} (1989), 97--109.

\bibitem{Sierp} W. Sierpi\'nski, \emph{Sur la question de la measurabilit\'e de la base de M. Hamel}, Fund. Math. \textbf{1} (1920), 105--111.

\bibitem{sion} M. Sion, \emph{Topological and Measure Theoretic Properties of Analytic Sets},  Proc. Amer. Math. Soc. \textbf{11} (1960), 769--776. 

\bibitem{Sous} M. Souslin, \emph{Sur une d\'efinition des ensembles mesurables B sans nombres transfinis}, C. R. Acad. Sci. Paris \textbf{164} (1917), 88--91.

\bibitem{Tao-prod} T. Tao, \emph{Product set estimates for non-commutative groups}, Combinatorica \textbf{28} (2008), no. 5, 547--594. 

\bigskip

\end{thebibliography}
\end{document}